\title{Combinatorial simplex algorithms can solve mean payoff games}
 \author{Xavier Allamigeon\footnotemark[2] \footnotemark[4]
\and Pascal Benchimol\footnotemark[2] \footnotemark[5]
\and St\'ephane Gaubert\footnotemark[2] \footnotemark[4]
\and Michael Joswig\footnotemark[3] \footnotemark[6] }
\newtheorem{assumption}{Assumption}
\newtheorem{primedassumption}{Assumption}
\newtheorem{remark}[theorem]{Remark}
\newtheorem{example}[theorem]{Example}
\DeclareSymbolFont{stmry}{U}{stmry}{m}{n}
\DeclareMathSymbol\mapsfromchar\mathrel{stmry}{"5B}
\newcommand{\ie}{\textit{i.e.},} 
\newcommand{\etc}{\textit{etc}}
\newcommand{\arxiv}[1]{arXiv:#1}
\DeclareMathOperator*{\sign}{sign}
\DeclareMathOperator*{\tper}{tper}
\DeclareMathOperator*{\Sym}{Sym}
\DeclareMathAlphabet\mathbfcal{OMS}{cmsy}{b}{n}
\DeclareMathAlphabet{\mathbbold}{U}{bbold}{m}{n}
\newcommand{\R}[0]{\mathbb{R}}
\newcommand{\K}[0]{K}
\newcommand{\trop}[1][]{
\ifthenelse{\equal{#1}{}}{ \mathbb{T} }{ \mathbb{T}(#1) }%
}    
\newcommand{\strop}[1][]{{\trop[#1]}_{\pm}}    
\newcommand{\postrop}[1][]{{\trop[#1]}_{+}}    
\newcommand{\negtrop}[1][]{{\trop[#1]}_{-}}
\newcommand{\transpose}[1]{#1^{\top}}
\newcommand{\tplus}{\oplus}          
\newcommand{\tsum}{\bigoplus}        
\newcommand{\ttimes}{\odot}       
\newcommand{\tdot}{\odot} 	
\newcommand{\tminus}{\ominus}
\newcommand{\zero}{\mathbbold{0}}    
\newcommand{\unit}{\mathbbold{1}}    
\newcommand{\tropP}{\mathcal{P}}
\newcommand{\puiseux}[0]{\mathbb{K}} 
\newcommand{\hahn}[1]{\R[\![t^{#1}]\!]}
\DeclareMathOperator*{\val}{val}
\DeclareMathOperator*{\sval}{sval}
\newcommand{\puiseuxP}[0]{\bm{\mathcal{P}}}
\newcommand{\x}[0]{\bm{x}}
\newcommand{\y}[0]{\bm{y}}
\newcommand{\p}[0]{\bm{p}}
\newcommand{\cc}[0]{\bm{c}}
\newcommand{\A}[0]{\bm{A}}
\newcommand{\W}[0]{\bm{W}}
\renewcommand{\b}[0]{\bm{b}}
\newcommand{\M}[0]{\bm{M}}
\newcommand{\puiseuxl}[0]{\bm{l}}
\newcommand{\puiseuxu}[0]{\bm{u}}
\newcommand{\ileaving}[0]{i^{\mathsf{out}}}
\newcommand{\ient}[0]{i^{\mathsf{ent}}}
\newcommand{\group}{G}
\newcommand{\groupPrime}{H}
\newcommand{\first}{\pi}
\newcommand{\tropLP}{\text{LP}}
\newcommand{\phaseI}{\text{Phase I}}
\newcommand{\puiseuxLP}{\bm{LP}}
\newcommand{\IpertLP}{\overline{\Ipert{LP}}}
\newcommand{\pertLP}{{\widetilde{\Ipert{LP}}}}
\newcommand{\germs}{\mathbb{G}}
\newcommand{\sgerms}{\mathbb{G}_{\!\pm}}
\newcommand{\perturb}[2][]{
\ifthenelse{\equal{#1}{}}{ \tilde{\Ipert{#2}} }{ #2[#1] }%
}
\newcommand{\pert}[2][]{
\ifthenelse{\equal{#1}{}}{ \widetilde{\Ipert{#2}} }{ \Ipert{#2}[#1] }%
}
\newcommand{\epsMat}{E}
\newcommand{\epsMatEnt}{\epsilon}
\newcommand{\Igerms}{\mathbb{I}}
\newcommand{\sIgerms}{\Igerms_{\!\pm}}
\newcommand{\second}{\rho}
\newcommand{\Ipert}[1]{
\mathsf{#1}}
\newcommand{\Iperturb}[1]{\Ipert{#1}}
\newcommand{\strategy}{\phi}
\newcommand{\strategytrop}{\strategy^{\text{trop}}}
\newcommand{\MinorOracle}{$\Omega$}
\begin{document}
\maketitle

\renewcommand{\thefootnote}{\fnsymbol{footnote}}
\footnotetext[2]{INRIA and CMAP, \'Ecole Polytechnique, 91128 Palaiseau Cedex France
\texttt{firstname.lastname@inria.fr}}
\footnotetext[3]{Institut f{\"u}r Mathematik,
 TU Berlin, 
 Str.\ des 17. Juni 136, 10623 Berlin, Germany
 \texttt{joswig@math.tu-berlin.de}}

\footnotetext[4]{X.~Allamigeon and S.~Gaubert are partially supported by the PGMO program of EDF and Fondation Math\'ematique Jacques Hadamard.}
\footnotetext[5]{P.~Benchimol is supported by a PhD fellowship of DGA and \'Ecole Polytechnique.}
\footnotetext[6]{M.~Joswig is partially supported by Einstein Foundation Berlin and DFG within the Priority Program 1489.}

\renewcommand{\thefootnote}{\arabic{footnote}}

\begin{abstract}
  A combinatorial simplex algorithm is an instance of the simplex method in which the pivoting depends on certain
  combinatorial data only. We show that any algorithm of this kind admits a tropical analogue which can be used to solve
  mean payoff games. Moreover, any combinatorial simplex algorithm with a strongly polynomial complexity (the existence
  of such an algorithm is open) would provide in this way a strongly polynomial algorithm solving mean payoff games.
  Mean payoff games are known to be in $\text{NP} \cap \text{co-NP}$; whether they can be solved in polynomial time is
  an open problem. Our algorithm relies on a tropical implementation of the simplex method over a real closed field of
  Hahn series. One of the key ingredients is a new scheme for symbolic perturbation which allows us to lift an arbitrary
  mean payoff game instance into a non-degenerate linear program over Hahn series.
\end{abstract}

\begin{keywords}
  Tropical geometry, linear programming, mean payoff games, symbolic perturbation, Hahn series, real closed fields
\end{keywords}

\begin{AMS}\end{AMS}

\pagestyle{myheadings}
\thispagestyle{plain}

\section{Introduction}

The purpose of this paper is to establish a link between two notoriously open problems in theoretical computer science.
It is unknown whether or not there is a strongly polynomial algorithm for linear programming.  For mean payoff games,
even the existence of a (not necessarily strongly) polynomial time algorithm to decide which player has a winning
strategy is an open question.
Without offering a solution to either problem, we show that a solution for the first
problem, with additional properties, implies a solution for the second one.  Our proof uses tropical geometry, or rather
tropical linear algebra, in an essential way.

The question of the existence of a polynomial time algorithm for mean payoff games was originally raised
in~\cite{GKK-88}. Mean payoff games were shown to be in $\text{NP} \cap \text{co-NP}$ in~\cite{zwick}.  They were shown
to be equivalent to feasibility problems in tropical linear programming in~\cite{AGG}.  Classically, as well as
tropically, linear programming feasibility and linear optimization are equivalent.  In \cite{tropical+simplex} we
devised an algorithm to solve tropical linear optimization problems.  However, that algorithm is limited to primally and
dually non-degenerate problems.  Further, it only provides a Phase II simplex method, that is, it requires a tropical
basic point as additional input.  Classically, Phase I, which finds a first basic point, can be reduced to a Phase II
problem, but this requires to be able to deal with degenerate input.  Therefore the algorithm in \cite{tropical+simplex}
does \emph{not} solve arbitrary mean-payoff games.

For experts in classical linear programming it is tempting to underestimate the difficulty to generalize an algorithm
which works for non-degenerate tropical linear optimization problems to degenerate ones.  To explain this, it is useful
for a moment to attain an algebraic geometry perspective.  The tropicalization of an (affine) algebraic variety $V$,
that is, the joint vanishing locus of finitely many polynomials in $d$ indeterminates over a field with a
non-archimedian valuation, is a polytopal complex, the tropical variety $T(V)$, in $\R^d$, which is gotten by applying
the valuation map coordinate-wise to all points in $V$.  Key features of $V$ are visible in $T(V)$.  For instance, if
$C$ is an irreducible planar algebraic curve over an algebraically closed field, then $T(C)$ is a connected planar
graph.  In this case the genus of $C$ coincides with the rank of the first (co-)homology module of $T(V)$, that is, the
number of edges of $T(C)$ minus the number of edges of a spanning tree; see \cite{ItenbergMikhalkinShustin07}
\cite[\S1.7]{MaclaganSturmfels}.  This way the piece-wise linear object $T(C)$ encodes non-trivial exact information
about the arbitrarily non-linear object $C$.  In a similar fashion, our results can be interpreted to show that
mean-payoff games carry essential information about classical linear programs from a computational complexity point of
view.  It is a basic fact that the intersection of two algebraic varieties is again an algebraic variety, and it is
another basic fact that the tropical analogue does \emph{not} hold.  The impact for our algorithmic problems is the
following.  In a primally non-degenerate classical linear program in $d$ variables each basic point is the unique point
in the intersection of precisely $d$ affine hyperplanes, given by the constraints, and each edge of the feasible region
arises as the intersection of precisely $d-1$ affine hyperplanes.  In this situation tropicalization and intersection
commute, and the affine algebraic variety which is the intersection of the $d-1$ hyperplanes at an edge is a complete
intersection.  In the degenerate setting, however, this is no longer true and therefore the algorithm from
\cite{tropical+simplex}, which builds on the direct connection between the classical and the tropical worlds, cannot be
applied.

To overcome this obstacle, as one of our key contributions here, we introduce a new scheme for symbolic perturbation
which is tailored to the needs of tropical linear programming.  Our transformation of a combinatorial simplex algorithm
into a method to solve mean-payoff games then works by applying our algorithm from \cite{tropical+simplex} to perturbed
tropical linear programs.  In spirit, this perturbation scheme is similar to other techniques known in computational
geometry~\cite{Edelsbrunner87,EmirisCannySeidel97} and linear
optimization~\cite{Jeroslow1973,FilarAltmanAvrachenkov2002}.  Yet from the technical point of view our approach is quite
different as it requires to construct special fields of formal Hahn series.  We suspect that this has more applications
in tropical geometry, which are independent of algorithmic questions.

Another ingredient of our approach is the application of model theory arguments to the analysis of classical simplex
methods. This allows us to apply Tarski's principle for real-closed fields (to the fields of Hahn series required for
the perturbation) without losing the grip on exact complexity bounds.  Starting from a basic point the simplex method
traces a path in the vertex-edge graph which is directed by the linear objective function.  At each basic point on the
way, the pivoting rule decides which edge to pick next.  We call a pivoting rule \emph{combinatorial} if this decision
depends on evaluating signs of minors of the extended matrix only.  These signs, in particular, include the orientations
of edges of the feasible region.  For such pivoting rules we can show that the number of classical pivots yields an
upper bound for the number of their tropical counterparts.  This way we arrive at the main result of this paper.
Theorem~\ref{th:main} and Corollary~\ref{cor:main} say that a strongly polynomial classical simplex algorithm with a
combinatorial pivoting rule yields a strongly polynomial algorithm for solving mean payoff games, provided that each
pivoting step is polynomially bounded.

The essence of our method is to think of tropical polyhedra as images of classical polyhedra over ordered fields by a
non-archimedean valuation, in accordance with previous work in tropical convexity, in particular,
see~\cite{develin2004,DevelinSantosSturmfels05,DevelinYu07}.  Alternatively, we may consider images of classical
polyhedra over the field of real numbers by the ``valuation'' which takes the log of the absolute value. In this way,
tropical polyhedra can be obtained as limits of classical polyhedra defined either by inequalities or generators with
exponentially large coefficients. This approach, which is a variation on ``Viro's method''~\cite{viro2000} in tropical
geometry, or on ``Maslov's dequantization''~\cite{litvinov00}, has been applied by several authors to derive results of
tropical convexity, see in particular~\cite{BriecHorvath04}, where tropical convex sets are shown to be
Painlev\'e-Kuratowski limits of classical convex sets, and~\cite{GM08,AllamigeonGaubertKatzJCTA2011} for some
combinatorial applications.  A somehow related method was used in~\cite{schewe2009} to show that solving a mean payoff
game reduces to solving linear programs with exponentially large coefficients.  The size of the numerical data involved
in this reduction is not polynomially bounded in the length of the input, and so, along these lines, a strongly
polynomial linear programming algorithm would only yield an algorithm to solve mean payoff games that is
pseudo-polynomial in the Turing machine model.  The difficulty of working with exponentially large coefficients is
solved here by a symbolic (Hahn series) approach combined with perturbation arguments, at the price of limiting the
scope of the reduction to a specific class of pivoting algorithms.  We finally note that tropical linear programming
algorithms using different principles have been developed in~\cite{BA-08}, \cite[Chapter 10]{But:10}, and~\cite{GKS11}.

\section{Preliminaries}\label{sec:preliminaries}

\subsection{Tropical semirings} \label{subsec:trop_semirings}

A common example of tropical semiring is the max-plus semiring $(\R \cup \{-\infty\}, \max, +)$. In the present paper, more generally, we consider a semiring  $(\trop[\group], \tplus, \ttimes)$ based on a totally ordered abelian group $(\group, +, \leq)$, defined as follows. The base set is $\trop[\group] = \group \cup \{ \zero_{\trop[\group]} \}$ where the new element $\zero_{\trop[\group]} \not \in \group$ satisfies   $\zero_{\trop[\group]} \leq a$ for all $a \in \group$.
 The additive law $\tplus$ is defined by $a \tplus b = \max (a,b)$, where the maximum is taken with respect to $\leq$. The multiplication $\ttimes$ is the addition $+$ extended to $\trop[\group]$ by setting  $a + \zero_{\trop[\group]} = \zero_{\trop[\group]} + a = \zero_{\trop[\group]}$ for all $a \in \trop[\group]$.

The zero and unit elements are $\zero_{\trop[\group]}$ and $\unit_{\trop[\group]} := 0_\group$, the neutral element of $G$, respectively. 
In the following, we simply denote $\trop[\group]$ by $\trop$ when this is clear from the context.

The operations are extended to matrices $A = (A_{ij}), B = (B_{ij})$ with entries in $\trop$ by setting $A \tplus B = (A_{ij} \tplus B_{ij})$ and $A \tdot B = (\tsum_k A_{ik} \tplus B_{kj})$. In the following, unless explicitly stated, the entries of a matrix $A$ are denoted by $A_{ij}$. Moreover, we denote by $A_I$  the submatrix of $A$ obtained with rows indexed by $I$. By abuse of notation, we denote $A_{\{i\}}$ by $A_i$.

The \emph{signed tropical numbers} $\strop = \postrop \cup  \negtrop$ consist of two copies of $\trop $, the set of \emph{positive tropical numbers} $\postrop$ and the set of \emph{negative tropical numbers} $\negtrop$. These elements are respectively denoted as $a$ and $ \tminus a$ for $a \in \trop$. The elements $a$ and  $\tminus a$  are distinct unless $a = \zero_{\trop}$. In the latter case, these  two elements are identified, \ie\ we have $\zero_{\trop} = \tminus \zero_{\trop}$. 
The \emph{sign} of the elements  $ a$ and $\tminus a $ are $\sign(a) = 1$ and $\sign(\tminus a) = -1$, respectively,  when $a$ is not $\zero_{\trop}$, and $\sign(\zero_{\trop}) =0$.
The \emph{modulus} of $x \in \{ a, \tminus a \}$ is defined as $|x| := a$.
The \emph{positive part} $x^+$ of an element $x \in \strop$ is the tropical number $|x|$ if $x$ is positive, and it is $\zero_{\trop}$ otherwise.  The
negative part $x^-$ is similarly defined.  We have $x^+ \tplus x^-=|x|$.
Modulus, positive and negative parts extend to matrices entry-wise.
We also equip $\strop$ with a \emph{reflection map} $x \mapsto \tminus x$ which sends a positive element $a$ to $\tminus a$, and a negative element $\tminus a$ to $a$.

The tropical permanent of a square matrix $M \in \strop^{n \times n}$ is :
\begin{equation}  \label{eq:tdet}
\tper M := \tsum_{ \sigma \in \Sym([n])}  |M_{1 \sigma(1)}| \ttimes  \dots \ttimes |M_{n \sigma(n)}| = \max_{ \sigma \in \Sym([n])}   |M_{1 \sigma(1)}| +  \dots +|M_{n \sigma(n)}| \ ,
\end{equation}
where $\Sym([n])$ is the set of all permutations of $[n] := \{1, \dots, n \}$. We point out that $\tper M$ can be computed in $O(n^3)$ arithmetic operations by solving an optimal assignment problem. This also provides an optimal permutation $\sigma$. 

A matrix $M \in \strop^{n \times n}$ is \emph{tropically non-singular} if $\tper M \neq \zero_{\trop}$ and there is only one  permutation $\sigma$ attaining the maximum in the right-hand side of~\eqref{eq:tdet}.
In this case, we define the \emph{sign} of $\tper M$ to be the product $\sign(\sigma) \sign(M_{1 \sigma(1)}) \dots \sign(M_{n \sigma(n)})$. By extension, we say that the sign of $\tper M$ is 0 when $\tper M = \zero_{\trop}$.

An arbitrary matrix $M \in \strop^{m \times n}$ is \emph{tropically generic} if for every submatrix $W$ of $M$, either $\tper W = \zero_{\trop}$ or $W$ is tropically non-singular. Note that a tropically generic matrix may have $\zero_{\trop}$ entries. Also observe that, if $M$ is tropically generic, then the signs of the permanent of any square submatrix of $M$ are well-defined.  We call these the \emph{signs of the tropical minors} of $M$.

\subsection{Hahn series}
\label{sec:hahn-series}

Given a totally ordered abelian group $(\group, +, \leq)$, the field of \emph{Hahn series} $\hahn{\group}$ with value group $\group$ and with
real coefficient comprises the formal power series $\x := \sum_{ \alpha \in \Lambda } x_{\alpha} t^{\alpha}$ where the \emph{support} $\Lambda \subset
\group$ is well-ordered, and the coefficients $x_{\alpha}$ are non-zero real numbers. The \emph{valuation} $\val(\x)$ of a non-zero Hahn series $\x$
is $- \alpha_{\min}$, where $\alpha_{\min} = \min \{ \alpha \mid \alpha \in \Lambda \}$. By convention, we set $\val(0) = \zero_{\trop(\group)}$.
A non-zero Hahn series $\x$ is \emph{positive}, and we write $\x> 0$ in this case, when the leading coefficient $x_{-\val(\x)}$ is a positive real number. More generally, $\x \geq \y$ if $\x = \y$ or $\x - \y > 0$.

Throughout, we write $\puiseux$ instead of $\hahn{\group}$. Equipped with the usual operations on formal power series, $\puiseux$ forms a totally
ordered field.  The valuation map is an order-preserving homomorphism from the semiring $\puiseux_+$ of non-negative Hahn series to the tropical
semiring $\trop$.
More formally, for any series $\x,\y \in \puiseux$ satisfying $\x \geq 0$ and $\y \geq 0$, we have $\val(\x + \y) = \val(\x) \tplus \val(\y)$,
$\val(\x \y) = \val(\x) \ttimes \val(\y)$, and $\x \geq \y$ implies $\val(\x) \geq \val(\y)$. We extend the valuation map to vectors of $\puiseux^n$ coordinate-wise.

The \emph{signed valuation} $\sval$ is a map from $\puiseux$ to $\strop$ which sends $\x$ to $\val(x)$ if $\x$ is positive, and to $\tminus \val(\x)$ otherwise. Given $x \in \strop$, we denote by $\sval^{-1}(x)$ the set of all Hahn series $\x$ such that $\sval(\x) = x$. These two notations are extended to vectors and matrices entry-wise.

We call a matrix $\M \in \puiseux^{m \times n}$ \emph{generic} if for each square submatrix $\W \in \puiseux^{p \times p}$, either $\det \W \neq 0$,
or for all $\sigma \in \Sym([p])$, the term $(-1)^{\sign(\sigma)} \W_{1 \sigma(1)} \dots \W_{p \sigma(p)}$ is null.  Notice that our definition of
genericity is slightly more general than the more common requirement that all $p{\times}p$-minors are non-vanishing whenever $p\ge 2$.
Observe that if $M = \sval(\M)$ is tropically generic, then $\M$ is generic.  The converse does not hold.  If $M$ is tropically generic, then the signs of
the minors of $\M$ coincide with the signs of the tropical minors of $M$.

\subsection{From mean payoff games to tropical linear programming}
\label{sec:from-mean-payoff}

We recall the equivalence between mean payoff games
and tropical linear feasibility in the max-plus semiring $\trop[\R]$, where $\R$ is considered as the ordered group $(\R, +, \leq)$. In this setting, $\zero_{\trop[R]} = - \infty$. We refer the reader to~\cite{AGG} for details.
We shall describe a mean payoff game by a pair of {\em payment matrices}
$A ,B \in \trop[\R]^{m\times n}$. We also
fix an {\em initial state} $\bar{\jmath}\in[n]$.
The corresponding (perfect information)
game is played by alternating the moves of the two players,
called ``Max'' and ``Min'', 
as follows. We start from state $j_0:=\bar{\jmath}$.
Player Min
chooses a state $i_1\in [m]$ such that $A_{i_1 j_0} \neq - \infty$, and receives a
payment of $A_{i_1j_0}$ from
player Max. Then, Player Max chooses a state $j_1\in[n]$ such that  $B_{i_1j_1} \neq - \infty$ , and
receives a payment of $B_{i_1j_1}$ from Player Min. Player
Min again chooses a state $i_2\in [m]$ such that $A_{i_2 j_1} \neq -\infty$, receives a payment of $A_{i_2j_1}$ from
Player Max, and so on. If $j_0,i_1,j_1,i_2,j_2,\dots$ is the infinite sequence
of states visited in this way, the {\em mean
payoff} of player Max is defined to be
\[
\liminf_{p\to \infty} p^{-1}(-A_{i_1j_0}+B_{i_1j_1}-A_{i_2j_1}+B_{i_2j_2}+\cdots-A_{i_pj_{p-1}}+B_{i_pj_p}) \enspace  .
\]
It is assumed that $A$ has no identically $-\infty$ column, and that $B$ has no identically $-\infty$ row, so that Players Min and Max have at least one available action with finite payment, at each stage.
This game is known to have a value, which depends on the initial state $\bar{\jmath}$.
We denote it by $\chi_{\bar{\jmath}}$. We say that the initial state $\bar{\jmath}$ is winning
for Player Max if $\chi_{\bar{\jmath}}\geq 0$. The following theorem characterizes
the set of winning states. It is an immediate 
translation, in the language of linear programming, of a result of~\cite{AGG}.
\begin{theorem}[See Theorem~3.2 of~\cite{AGG}] \label{thm:MPG_to_tropLP}
The initial state $\bar{\jmath}\in[n]$ is winning for Player Max,
in the mean payoff game with payment matrices
$A, B $, if and only if there exists a solution $x\in \trop[\R]^n$
to the system of 
tropically linear inequalities 
\begin{equation*}
 x_{\bar{\jmath}} \geq 0 \ , \qquad A \ttimes x\leq B \ttimes x \ .
\end{equation*}
\end{theorem}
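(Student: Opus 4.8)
The plan is to read the tropical inequality system as the statement that $x$ is a sub-fixed point of the one-step dynamic programming (Shapley) operator of the game, and then to recognize Theorem~\ref{thm:MPG_to_tropLP} as the standard characterization of the winning states of a mean payoff game in terms of such sub-fixed points. Concretely, I would introduce the operator $T \colon \trop[\R]^n \to \trop[\R]^n$ defined by
\[
T(x)_j := \min_{i \,:\, A_{ij} \neq -\infty} \Bigl( -A_{ij} + \max_{j' \,:\, B_{ij'} \neq -\infty} \bigl(B_{ij'} + x_{j'}\bigr) \Bigr) ,
\]
which is exactly the value, for Player Max, of the one-round game played from state $j$ when the terminal payoff at a state $j'$ is set to $x_{j'}$ (Min moves to some $i$ paying $A_{ij}$, then Max answers with some $j'$ receiving $B_{ij'}$); the assumption that $A$ has no identically $-\infty$ column guarantees $T$ is well defined with values in $\trop[\R]$. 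The first step is a purely formal unwinding of $\ttimes$: for $x \in \trop[\R]^n$, one has $(A \ttimes x)_i \le (B \ttimes x)_i$ for all $i$ if and only if $A_{ij} + x_j \le (B \ttimes x)_i$ for all $i$ and all $j$, which, regrouping by $j$, is precisely $x_j \le T(x)_j$ for all $j$. Hence the system $\{\,x_{\bar\jmath} \ge 0,\ A \ttimes x \le B \ttimes x\,\}$ is solvable in $\trop[\R]^n$ if and only if $T$ has a sub-fixed point $x$ (meaning $x \le T(x)$) with $x_{\bar\jmath} \neq -\infty$; indeed $T$ is additively homogeneous, so any such $x$ can be translated by a constant to make $x_{\bar\jmath} = 0$.

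For the direction "solvable $\Rightarrow$ winning", given a sub-fixed point $x$ with $x_{\bar\jmath}$ finite I would exhibit the positional strategy for Max that, from an intermediate state $i$, moves to a state attaining $\max_{j'}(B_{ij'} + x_{j'})$. An induction along any play shows that all visited states have finite $x$-coordinate and that the payoff accumulated by Max after $p$ rounds telescopes to at least $x_{\bar\jmath} - x_{j_p} \ge x_{\bar\jmath} - \max_k x_k$, a constant independent of $p$; dividing by $p$ and taking $\liminf$ yields mean payoff $\ge 0$, hence $\chi_{\bar\jmath} \ge 0$. For the converse, assume $\chi_{\bar\jmath} \ge 0$. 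Here I would use the structure theory of mean payoff games: Max has a positional optimal strategy, and fixing it turns the game into a one-player (Min-controlled) weighted digraph on the states; $\chi_{\bar\jmath} \ge 0$ forces every cycle reachable from $\bar\jmath$ in this digraph to have nonnegative weight, so a finite potential exists on the part reachable from $\bar\jmath$, and setting $x_j := -\infty$ on the rest, the triangle inequalities for the potential become exactly the inequalities $x_j \le T(x)_j$, the coordinates set to $-\infty$ only rendering the remaining inequalities trivial. An alternative is to invoke Kohlberg's invariant half-line theorem for the order-preserving, additively homogeneous, piecewise-affine map $T$: it produces a vector $u$ and the value vector $\chi$ with $T(u + s\chi) = u + (s+1)\chi$ for $s$ large, and one sets $x_j := u_j + s\chi_j$ on $\{j : \chi_j \ge 0\}$ and $x_j := -\infty$ elsewhere, checking that the winning region is a trap for Max so that $T$ restricts correctly.

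The bookkeeping — the dictionary between the $\ttimes$-inequalities and sub-fixed points of $T$, and the sufficiency direction, which is a one-line strategy argument — is routine. The genuine content lies entirely in the converse direction: passing from "the value at $\bar\jmath$ is nonnegative" to "a finite sub-fixed-point potential exists" requires the nontrivial facts that mean payoff games have a value, that Max has a positional optimal strategy, and that the winning region is stable under Max's optimal responses so that $T$ induces an operator on the corresponding sub-game. Since this is exactly the content of Theorem~3.2 of~\cite{AGG}, the most economical option in the paper is to cite that result and to record only the translation of the optimality operator inequalities into the tropical matrix form $A \ttimes x \le B \ttimes x$, together with the observation that $x_{\bar\jmath} \ge 0$ can always be arranged by an additive shift.
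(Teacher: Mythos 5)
Your proposal is correct and, in its conclusion, matches the paper exactly: the paper does not prove this statement but records it as an immediate translation of Theorem~3.2 of~\cite{AGG}, which is precisely the ``most economical option'' you arrive at. Your sketch of the underlying argument (sub-fixed points of the Shapley operator, the telescoping strategy argument for sufficiency, positional optimal strategies and potentials for necessity) is an accurate account of what the cited result rests on, but it is not reproduced in the paper.
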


\begin{figure}
  \centering
  \begin{tikzpicture}[x=0.8cm, y=0.8cm,
 var/.style={circle,draw=black, thick,inner xsep=1ex, text width=2ex, align = center},
 hyp/.style={rectangle,draw=black, thick,inner xsep=1ex, text width=2ex, align=center, minimum size=3.5ex },
 edge/.style={draw=black,thick, >=stealth', ->} ]

\begin{scope}[shift={(0,0)}]

\node [var] (x) at (0,0) {$1$};
\node [var] (y) at (-2,2) {$2$};
\node [var] (z) at (0,4) {$3$};
\node [var] (t) at (2,2) {$4$};
\node [var] (u) at (4,2) {$5$};

\node [hyp] (1) at (0,2) {$1$};
\node [hyp] (2) at (-2,0) {$2$};
\node [hyp] (3) at (-2,4) {$3$};
\node [hyp] (4) at (2,4) {$4$};
\node [hyp] (5) at (2,0) {$5$};

\draw[edge] (x) --  (1);
\draw[edge] (1) -- node[anchor = south] {$-1$}(y);      
\draw[edge] (1) -- node[anchor = east] {$-2$}(z);      

\draw[edge] (y) -- (2);      
\draw[edge] (2) --  node[anchor = south] {$-3$} (x) ; 
\draw[edge] (2) to[out = 140, in = -120]  (-3,4) to[out= 60, in = 110] (z);             

\draw[edge] (z) -- (3);      
\draw[edge] (3) -- node[anchor = west] {$-4$} (y); 
\draw[edge] (3) to[out = 200, in = 120 ]  (-3,0) to[out= -60, in = 250] (x);        

\draw[edge] (z) -- (4);      
\draw[edge] (t) to[bend right] (4);      
\draw[edge] (4) to[bend right] node[anchor = east] {$+1$} (t);      
\draw[edge] (u) to[bend right] (4);

\draw[edge] (5) -- (x);
\draw[edge] (t) -- (5);
\draw[edge] (5) to[bend right]  node[anchor = south east] {$+2$}  (u);
 \end{scope}

  \end{tikzpicture}
\caption{A mean payoff game. The states in which Max plays are depicted by squares, while the states in which Min plays are depicted by circles. Edges represents valid moves, and are weighted by payments. An edge with no weight indicate a 0 payment.}
\label{fig:MPG}
\end{figure}
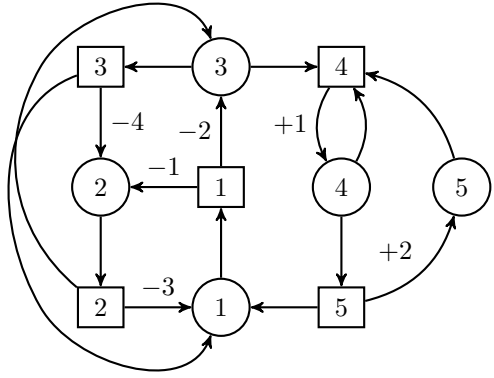

\begin{example}
 The mean payoff game with the following payment matrices is depicted in Figure~\ref{fig:MPG} (for the sake of readability, $-\infty $ entries are represented by the symbol ``$\cdot$''):
     \begin{equation*}
      A = 
      \begin{pmatrix}
         0 & \cdot  & \cdot  &\cdot &\cdot\\
         \cdot & 0 & \cdot & \cdot& \cdot\\
         \cdot & \cdot & 0 & \cdot& \cdot\\
         \cdot &\cdot & 0 & 0 &0 \\
          \cdot&\cdot&\cdot&0&\cdot
      \end{pmatrix}  , \quad
B= 
\begin{pmatrix}
   \cdot& -1 & -2 &\cdot &\cdot \\
  -3 & \cdot  &0 &\cdot &\cdot \\
   0 & -4 &\cdot &\cdot &\cdot \\
\cdot &\cdot &\cdot & +1 &\cdot \\
 0 &\cdot &\cdot &\cdot &+2
\end{pmatrix}
 \end{equation*}
In this game, the only winning initial states for Max are $\{4,5\}$. Indeed, the vector $(- \infty, - \infty, - \infty ,0, 0) $ is a solution of the following system of tropical linear inequalities, and any solution  $x \in \trop[\R]^5$ of this system satisfies $x_1=x_2=x_3 = -\infty$:
    \begin{align*}
      x_1 &\leq \max( x_2  -1,  x_3 -2) \\
      x_2 &\leq  \max( x_1-3,  x_3) \\
      x_3 &\leq  \max (x_1,x_2-4 ) \\
      \max(x_3, x_4, x_5) &\leq x_4 +1 \\
      x_4 &\leq \max(x_1, x_5+2) \ .
    \end{align*} 

\end{example}

\section{The simplex method for tropical linear programming}\label{sec:handl-degen-line}

A \emph{tropical polyhedron} is the solution set $x \in \trop^n$ of a finite number of tropically affine inequalities of the form
\begin{equation}
\alpha_1 \tdot x_1 \tplus \dots \tplus \alpha_n \tdot x_n \tplus \alpha_{n+1} \geq \beta_1 \tdot x_1 \tplus \dots \tplus \beta_n \tdot x_n \tplus \beta_{n+1} \label{eq:trop_affine_ineq}
\end{equation}
where the $\alpha_j$, $\beta_j$ belong $\trop$. Without loss of generality (see~\cite[Lemma~1]{GaubertKatz2011minimal}), it can be assumed that for all $j$, either $\alpha_j$ or $\beta_j$ is equal to $\zero_{\trop}$. For instance, the inequality $\max(1 + x_1, x_2) \geq \max(-1 + x_1, 3 + x_2)$ is obviously equivalent to $1 + x_1 \geq 3 + x_2$. As a consequence, any system of inequalities of the form~\eqref{eq:trop_affine_ineq} can be transformed (in linear time) into the form $A^+ \ttimes x \tplus b^+ \geq A^- \tdot x \tplus b^-$, where $A \in \strop^{m \times n} $ and $b \in \strop^m$ have tropically signed entries. 
We shall consider the following tropical polyhedron
\[\tropP(A,b):= \{ x \in \trop^n \mid A^+ \tdot x \tplus b^+ \geq A^- \tdot x \tplus b^- \}
\enspace .\]

A \emph{tropical linear program} $\tropLP(A, b , c)$ is an optimization problem of the form
\begin{equation}   \label{pb:tropLP} 
  \begin{array}{ll}
    \text{\rm  minimize} & \transpose{c} \tdot x \ \\
    \text{\rm subject to} & x \in \tropP(A,b) \ ,
  \end{array}
  \tag*{$\tropLP(A,b,c)$}
\end{equation}
where $A \in \strop^{ m \times n}$, $b \in \strop^{m}$, and $c \in \trop^n$.  We say that the program is \emph{infeasible} if the tropical polyhedron
$\tropP(A,b)$ is empty. Otherwise, it is said to be \emph{feasible}.

An important property underlying the previous work~\cite{tropical+simplex} is the connection between tropical linear programs and linear programs over Hahn series. Indeed, as $\puiseux$ is a totally ordered field, all the notions related to linear programming, in particular convex polyhedra, duality, \etc{}, naturally apply. Following this, given $\A \in \puiseux^{m \times n} $ and $\b \in \puiseux^m$, we denote $ \puiseuxP(\A,\b) \subset \puiseux^n$ the polyhedron defined by the system of inequalities $\A \x + \b \geq 0$ and $\x \geq 0$. 

\begin{proposition}[{\cite[Prop.~7]{tropical+simplex}}] \label{prop:lifting_linear_prog}
For any tropical linear program $\tropLP(A,b,c)$, there exists $\A \in \sval^{-1}(A)$, $\b \in \sval^{-1}(b)$ and $\cc \in \sval^{-1} (c)$ such that the linear program over $\puiseux$
 \begin{equation*}   \label{pb:puiseux_linear_prog_pb} 
   \begin{array}{ll}
     \text{\rm  minimize} & \transpose\cc  \x \ \\
     \text{\rm subject to} &  \x \in \puiseuxP(\A,\b) \,,
   \end{array} \tag*{$\puiseuxLP(\A,\b,\cc)$}
 \end{equation*}
satisfies the following properties.
\begin{itemize}
\item\label{item:lifting_feasible_set} The image under the valuation map of $\puiseuxP(\A,\b)$ is the tropical polyhedron $\tropP(A,b)$. In particular, $\puiseuxLP(\A,\b, \cc)$ is feasible if, and only if, $\tropLP(A,b,c)$ is feasible.
\item If $\puiseuxLP(\A,\b, \cc)$ admits $\x^*$ as an optimal solution,  then $\val(\x^*)$ is an optimal solution of $\tropLP(A,b,c)$.
\end{itemize}
\end{proposition}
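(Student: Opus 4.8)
The plan is to produce explicit monomial lifts of $A$, $b$, $c$, to prove the first bullet by transferring the two defining inequalities across the valuation, and then to deduce the second bullet formally. For each entry I would set $\A_{ij}:=0$ if $A_{ij}=\zero_{\trop}$ and $\A_{ij}:=\sign(A_{ij})\,\mu_{ij}\,t^{-|A_{ij}|}$ otherwise, where $\mu_{ij}>0$ is a real coefficient to be pinned down later; $\b$ and $\cc$ are obtained by the same recipe, and $\cc\ge 0$ because $c\in\trop^n$. Whatever the $\mu$'s are, this gives $\A\in\sval^{-1}(A)$, $\b\in\sval^{-1}(b)$, $\cc\in\sval^{-1}(c)$. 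Writing $\A=\A^+-\A^-$ and $\b=\b^+-\b^-$, where the superscript $+$ keeps the positive entries (zeroing the rest) and the superscript $-$ negates the negative entries (zeroing the rest), the matrices $\A^{\pm},\b^{\pm}$ are nonnegative and lift $A^{\pm},b^{\pm}$, and the system $\A\x+\b\ge 0$, $\x\ge 0$ defining $\puiseuxP(\A,\b)$ rewrites as $\A^+\x+\b^+\ge\A^-\x+\b^-$, both sides being nonnegative Hahn vectors whenever $\x\ge 0$.

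The inclusion $\val(\puiseuxP(\A,\b))\subseteq\tropP(A,b)$ is then immediate and holds for every such lift: since $\val$ is an order-preserving homomorphism $\puiseux_+\to\trop$, any $\x\in\puiseuxP(\A,\b)$ satisfies $A^+\tdot\val(\x)\tplus b^+=\val(\A^+\x+\b^+)\ge\val(\A^-\x+\b^-)=A^-\tdot\val(\x)\tplus b^-$ componentwise, so $\val(\x)\in\tropP(A,b)$; in particular feasibility of $\puiseuxLP(\A,\b,\cc)$ implies feasibility of $\tropLP(A,b,c)$.

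The reverse inclusion is the heart of the matter and the step I expect to be the main obstacle. Given $x\in\tropP(A,b)$, the natural attempt is the monomial lift $\x_j:=t^{-x_j}$ (with $\x_j:=0$ when $x_j=\zero_{\trop}$), which has $\val(\x)=x$. Row by row, the two sides of the lifted constraint have valuations $(A^+\tdot x\tplus b^+)_i$ and $(A^-\tdot x\tplus b^-)_i$, which are already in the right order because $x\in\tropP(A,b)$; when this tropical inequality is strict the Hahn inequality at row $i$ holds at leading order whatever the $\mu$'s are. The only obstruction is a degenerate row at which the two maxima coincide, for then feasibility hinges on the sign of the difference of the real leading coefficients of the two sides, which is a difference of two sums of some of the coefficients $\mu$. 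One therefore has to choose the free positive coefficients — the $\mu$'s together with the leading coefficients of the $\x_j$ (so far fixed at $1$) — so that at every such row the left-hand sum dominates, possibly also inserting lower-order corrections into the $\x_j$ to repair rows that remain tied at all orders. Making these choices consistently across all rows is the delicate work; it is precisely here that the Hahn-series setting, as opposed to Puiseux series with prescribed coefficients, gives the room one needs, and it is a standard fact about tropicalizations of polyhedra over real closed valued fields. Once this is done, $\tropP(A,b)\subseteq\val(\puiseuxP(\A,\b))$, which gives the first bullet in full and the remaining implication for feasibility.

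With surjectivity established, the second bullet is a routine transfer. If $\x^*$ is optimal for $\puiseuxLP(\A,\b,\cc)$, then $\val(\x^*)\in\tropP(A,b)$ by the easy inclusion. For an arbitrary $y\in\tropP(A,b)$, surjectivity provides $\y\in\puiseuxP(\A,\b)$ with $\val(\y)=y$, and optimality of $\x^*$ gives $\transpose\cc\x^*\le\transpose\cc\y$. Both sides are nonnegative Hahn series (as $\cc,\x^*,\y\ge 0$), so applying $\val$ and using that $\val(\transpose\cc\bm{z})=\tsum_j c_j\tdot\val(\bm{z}_j)=\transpose c\tdot\val(\bm{z})$ for any $\bm{z}\ge 0$ yields $\transpose c\tdot\val(\x^*)\le\transpose c\tdot y$. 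Since $y$ was arbitrary in $\tropP(A,b)$, $\val(\x^*)$ is an optimal solution of $\tropLP(A,b,c)$.
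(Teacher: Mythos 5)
Your overall architecture is correct --- choose undetermined positive coefficients $\mu_{ij}$ for the monomial lifts $\A_{ij} = \sign(A_{ij})\,\mu_{ij}\,t^{-|A_{ij}|}$, establish the easy inclusion $\val(\puiseuxP(\A,\b)) \subseteq \tropP(A,b)$ using that $\val$ is an order-preserving homomorphism on $\puiseux_+$, and then transfer optimality once surjectivity of $\val$ is known. The second bullet is handled correctly. But the crucial reverse inclusion is precisely where your argument has a genuine gap, and that gap matters: for some choices of $\mu_{ij}$ (e.g.\ all equal to $1$) the inclusion genuinely fails. Take $n=3$, $b = \zero_{\trop}$, and the three tropical constraints $x_1 \le x_2$, $x_1 \le x_3$, $x_2 \tplus x_3 \le x_1$, so that $\tropP$ contains the whole diagonal $x_1 = x_2 = x_3$; the lift with all coefficients $1$ forces $\x_1 \ge \x_2 + \x_3$ and $\x_2,\x_3 \ge \x_1 \ge 0$, hence $\puiseuxP = \{0\}$ and $\val(\puiseuxP) = \{(\zero_{\trop},\zero_{\trop},\zero_{\trop})\} \subsetneq \tropP$. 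So the proposition is an existence statement that requires a careful \emph{fixed} choice of $\mu_{ij}$, working simultaneously for every $x \in \tropP(A,b)$; you cannot choose the $\mu$'s depending on $x$, and you cannot fall back on ``a standard fact about tropicalizations of polyhedra over real closed fields,'' since that is exactly what is being proved.

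The missing ingredient is short: make the positive-side coefficients uniformly dominate the negative side. For instance take $\mu_{ij} = n+2$ when $A_{ij}$ is tropically positive, $\mu_{ij} = 1$ when it is tropically negative, and similarly $n+2$ and $1$ for $b^+$ and $b^-$. Then for any $x \in \tropP(A,b)$ the plain monomial lift $\x_j = t^{-x_j}$ (with $\x_j = 0$ when $x_j = \zero_{\trop}$) is already feasible: in a tied row the leading coefficient of $(\A^+\x + \b^+)_i$ is at least $n+2$ (at least one term attains the maximum), while that of $(\A^-\x + \b^-)_i$ is at most $n+1$ (at most $n$ terms from $\A^-$ plus one from $\b^-$, each contributing $1$), so the Hahn inequality holds strictly. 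This also removes your need for ``lower-order corrections'' to $\x$ and for coefficients of $\x$ depending on $x$. With that choice pinned down, the rest of your write-up goes through.
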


The linear program $\puiseuxLP(\A, \b, \cc)$ in Proposition~\ref{prop:lifting_linear_prog} is called a \emph{lift} of $\tropLP(A, b, c)$. 

\subsection{Non-degenerate and bounded tropical linear programs}\label{subsec:generic_instances}

 The \emph{tropical simplex method} developed in~\cite{tropical+simplex} implicitly performs a simplex method on the Hahn linear program $\puiseuxLP(\A,\b,\cc)$, by doing only ``tropical'' computations over $\trop$, without explicitly manipulating Hahn series. This subsequently solves $\tropLP(A,b,c)$ by Proposition~\ref{prop:lifting_linear_prog}. However, this method applies only if the following assumptions are satisfied:
\begin{assumption}[Finiteness]\label{ass:finiteness}
The polyhedron $\tropP(A,b)$ 
 is bounded, and does not contain points with $\zero_{\trop}$ entries.
\end{assumption}

\begin{assumption}[Non-degeneracy] \label{ass:non_degeneracy}
  The matrix 
$\bigl(
\begin{smallmatrix}
A & b \\
\transpose{c} & \zero_{\trop} 
\end{smallmatrix}
\bigr)$ is tropically generic.
\end{assumption}
Under these assumptions, a \emph{tropical basic point} is defined as the unique point $x \in \tropP(A,b)$ activating a subset $I \subset [m]$ of $n$ inequalities in the system $A^+ \tdot x \tplus b^+ \geq A^- \tdot x \tplus b^-$ (\ie~for all $i \in I$, $A^+_i \tdot x \tplus b^+_i = A^-_i \tdot x \tplus b^-_i$), and such that $\tper A_I  \neq \zero_{\trop}$. The set $I$ is referred to as a \emph{tropical basis}. Tropical basic points are connected by \emph{tropical edges}, which are tropical line segments of the form $\{ x \in \tropP(A, b) \mid A^+_K \tdot x \tplus b^+_K = A^-_K \tdot x \tplus b^-_K \}$, where $K \subset [m]$ and $|K| = n-1$.

It turns out that tropical basic points and edges are intimately related to their analogues over Hahn series. Given $\A \in \sval^{-1}(A)$ and $\b \in \sval^{-1}(b)$, a basic point of $\puiseuxP(\A, \b)$ is characterized by a subset $I \subset [m]$ of active inequalities in the system $\A \x + \b \geq 0$, such that $\det \A_I \neq 0$.\footnote{Note that here the inequalities $\x_j \geq 0$ involved in the definition of $\puiseuxP(\A, \b)$ cannot be activated. Indeed, Assumption~\ref{ass:finiteness} ensures that $\puiseuxP(\A, \b)$ is contained in the open orthant $\x > 0$.} Then the tropical basic points are precisely the image under the valuation map of the basic points of $\puiseuxP(\A, \b)$. Similarly, it can be shown that tropical edges are the image of the edges of $\puiseuxP(\A, \b)$, and that the graph of incidence between basic points and edges in $\tropP(A, b)$ is the same as in $\puiseuxP(\A, \b)$. 

Starting from a given basic point, the algorithm of~\cite{tropical+simplex} visits the basic point/edge incidence graph of $\tropP(A, b)$, 
until it finds a basic point satisfying optimality conditions. We recall the main result of~\cite{tropical+simplex}:
\begin{theorem}[{\cite[Th.~1]{tropical+simplex}}]\label{th:tropical_simplex}
Under Assumptions~\ref{ass:finiteness} and~\ref{ass:non_degeneracy}, the tropical simplex algorithm  terminates and returns an optimal solution of $\tropLP(A,b,c)$ for any tropical pivoting rule. Every iteration (pivoting and computing reduced costs) can be done in $O(n(m+n))$ arithmetic operations over $\trop$, and in linear space. 
Moreover, the algorithm traces the image under the valuation map of the path followed by the classical simplex algorithm applied to any lift $\puiseuxLP(\A, \b, \cc)$
, with a compatible pivoting rule.
\end{theorem}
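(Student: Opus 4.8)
The plan is to reduce the whole statement to the behavior of the \emph{classical} simplex method on a lift $\puiseuxLP(\A,\b,\cc)$ and to argue that every decision taken along such a run depends only on data that can be recomputed over $\trop$. First I would fix, via Proposition~\ref{prop:lifting_linear_prog}, series $\A\in\sval^{-1}(A)$, $\b\in\sval^{-1}(b)$, $\cc\in\sval^{-1}(c)$ for which the conclusions of that proposition hold. Assumption~\ref{ass:non_degeneracy} asserts that $\bigl(\begin{smallmatrix}A & b\\ \transpose{c} & \zero_{\trop}\end{smallmatrix}\bigr)$ is tropically generic, so by the observations in Section~\ref{sec:hahn-series} the extended matrix $\bigl(\begin{smallmatrix}\A & \b\\ \transpose{\cc} & 0\end{smallmatrix}\bigr)$ is generic, and for each square submatrix $\W$ with $\det\W\neq 0$ one has $\val(\det\W)=\tper W$ with $\sign(\det\W)$ equal to the sign of the corresponding tropical minor of $W:=\sval(\W)$. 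Together with Assumption~\ref{ass:finiteness}, which keeps $\puiseuxP(\A,\b)$ bounded and inside the open orthant $\x>0$, genericity makes $\puiseuxLP(\A,\b,\cc)$ a non-degenerate bounded linear program over the ordered field $\puiseux$; hence the classical simplex method with any pivoting rule terminates at a vertex $\x^\ast$, which is optimal, and $\val(\x^\ast)$ is an optimal solution of $\tropLP(A,b,c)$ by Proposition~\ref{prop:lifting_linear_prog}.

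The core of the argument is then to show that the purely tropical algorithm simulates this run step by step. At a basis $I$, the quantities the classical method reads off --- the reduced cost of each non-basic index, and, once an entering index is selected, the ratios used in the min-ratio (leaving-variable) test --- are, by Cramer's rule, ratios of $n\times n$ and $(n{-}1)\times(n{-}1)$ minors of the extended matrix. By genericity each such minor is either $0$ or tropically non-singular, so its valuation is the corresponding tropical minor and its sign is the sign of that minor; therefore the \emph{sign} of every reduced cost, and the \emph{index} at which the min-ratio is attained, are functions of the signs of tropical minors, and that index is \emph{unique} because genericity rules out ties. Consequently, for any tropical pivoting rule there is a \emph{compatible} classical pivoting rule --- one whose sign-based choices on the lift agree with the tropical rule's choices --- and the tropical simplex algorithm, which evaluates exactly these signs of tropical minors, each via an optimal assignment problem over $\trop$, performs the same sequence of entering/leaving choices, hence visits the same sequence of bases. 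It therefore terminates together with the classical run, outputs the tropical basic point $\val(\x^\ast)$, which is optimal, and --- using the basic-point/edge correspondence recalled in Section~\ref{subsec:generic_instances} --- traces the $\val$-image of the classical path, which is the ``moreover'' clause.

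It remains to account for the per-iteration cost. Organizing the iteration in revised-simplex fashion, one keeps the current basis together with the $O(n)$-size auxiliary data needed to express the tropical minors incident to $I$; each reduced cost and the entering tropical column are then obtained from a constant number of tropical matrix--vector products, scanning the at most $m+n$ reduced costs and running the min-ratio test over at most $m+n$ candidates costs $O(n(m+n))$ tropical operations, and the post-pivot update of the stored data costs $O(n^2)$; the dominating term is $O(n(m+n))$, and since only $O(n)$-size objects are ever stored, the space is linear. The main obstacle, and the only place where genericity is truly used, is the second paragraph: one must be sure that the classical pivoting decisions depend on nothing finer than signs of minors and that no exact cancellation or tie occurs in the min-ratio test, since a single such coincidence would sever the correspondence between the tropical and classical paths and could even let the classical run cycle.
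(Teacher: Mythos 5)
The statement you were asked to prove is recalled verbatim from \cite{tropical+simplex} (Theorem~1 there); the present paper does not contain a proof of it, so there is no in-paper argument to compare against. I can therefore only assess your reconstruction on its own terms.

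Your first two paragraphs capture the right structural skeleton: pass to a lift $\puiseuxLP(\A,\b,\cc)$ via Proposition~\ref{prop:lifting_linear_prog}, use Assumptions~\ref{ass:finiteness} and~\ref{ass:non_degeneracy} to argue that the lifted LP is bounded and non-degenerate, and observe that genericity makes the sign of every relevant Hahn minor recoverable from the sign of the corresponding tropical minor, so the sequence of pivots of the classical run is determined by tropical data. That is indeed the governing idea. One caveat: as written, your argument shows that the classical run \emph{can be shadowed} tropically, but it does not really exhibit the tropical simplex algorithm as a standalone procedure. The theorem asserts termination and correctness of a specific tropical algorithm for any tropical pivoting rule, and the substance of \cite{tropical+simplex} is to define such an algorithm intrinsically (tropical Cramer determinants, identifying the adjacent basis, certifying optimality) and only then to prove that it coincides with the valuation of a classical path. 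Defining the tropical algorithm as ``whatever reproduces the signs seen by the classical method on a lift'' risks circularity, because one must first know the tropical computations are well-posed and unambiguous before a classical ``compatible'' rule can be paired with them.

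Your third paragraph, on the $O(n(m+n))$ per-iteration bound, is the weak point. The ``revised-simplex'' organization you appeal to hinges on maintaining a basis inverse and updating it by rank-one corrections, operations that have no counterpart over $\trop$ since tropical matrices are not invertible and subtraction is unavailable. You also note that a single tropical minor sign is obtained by solving an optimal assignment problem, which is $O(n^3)$ arithmetic operations; if each of the up to $m+n$ reduced costs required such a computation from scratch, the iteration would cost far more than $O(n(m+n))$. The bound in the theorem is achieved in \cite{tropical+simplex} through tropical-specific incremental machinery (walking in a tangent digraph/hypergraph associated with the current basic point and updating it along an edge), not through a transplanted classical revised-simplex scheme. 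So while your outline is a reasonable guess at the correctness argument, the complexity claim as you justify it does not hold up and would need to be replaced by the combinatorial update described in the cited work.
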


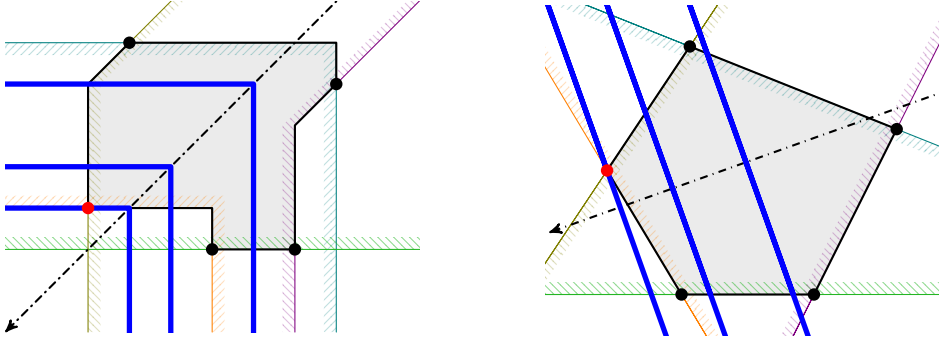
\begin{figure}
\centering
\begin{tikzpicture}[edge/.style={thick}, scale = 1]
     \colorlet{myblue}{rgb:red,0;green,102;blue,102}
      \colorlet{mygreen}{rgb:red,10;green,50;blue,10}
      \colorlet{purple}{rgb:red,50;green,0;blue,50}
      \colorlet{kaki}{rgb:red,50;green,50;blue,0}

      \colorlet{basicPoint}{black}
      \begin{scope} [shift = {(0,0)}, scale=0.55, line cap=round,line
        join=round,>=triangle 45,x=1.0cm,y=1.0cm]
         \clip(0,0) rectangle (10,8);
        
        \fill[fill=lightgray, fill opacity = 0.3] (2,3) -- (5,3) --
        (5,2) -- (7,2) -- (7,5) -- (8,6) -- (8,7) -- (3,7) -- (2,6) --
        cycle;

        \draw[ color=myblue] (0,7)-- (8,7) -- (8,0);
        \filldraw[draw=none,pattern=north east lines,pattern  color=myblue,fill opacity=0.5] 
        (0,7) -- (8,7) -- (8,0) -- (7.7,0) -- (7.7, 6.7) -- (0,6.7) --  cycle;
       
        \draw[color=orange] (0,3)-- (5,3)-- (5,0);
        \filldraw[draw=none,pattern=north east lines,pattern color=orange,fill opacity=0.5] 
        (0,3) -- (5,3) -- (5,0) --  (5.3, 0) -- (5.3, 3.3) -- (0,3.3) -- cycle;
      
        \draw[color=mygreen] (0,2)-- (11,2);
        \filldraw[draw=none,pattern=north west  lines,pattern color=mygreen,fill opacity=0.5]
        (0,2) -- (11,2) -- (11,2.3)  -- (0,2.3) --  cycle;

        \draw[color=purple] (7,0) -- (7,5)-- (12,10); 
        \filldraw[draw=none,pattern=north west lines,pattern color=purple,fill opacity=0.5] 
        (7,0) -- (7,5) -- (12,10) -- (11.7, 10) -- (6.7, 5) -- (6.7,0) -- cycle;
      
        \draw[color=kaki] (2,0) -- (2,6)-- (6,10); 
        \filldraw[draw=none,pattern=north west lines,pattern color=kaki,fill opacity=0.5] 
        (2,0) -- (2,6) -- (6,10) -- (6.3, 10) -- (2.3, 6) -- (2.3,0) -- cycle;
      
        \fill [basicPoint] (5,2) circle (0.15);
        \fill [basicPoint] (7,2) circle (0.15);
        \fill [basicPoint] (8,6) circle (0.15);
        \fill [basicPoint] (3,7) circle (0.15);

        \draw[edge] (2,3)--(5,3)--(5,2);
        \draw[edge] (5,2)--(7,2);
        \draw[edge] (7,2)--(7,5)--(8,6);
        \draw[edge] (8,6)--(8,7)--(3,7);
        \draw[edge] (3,7)--(2,6)--(2,3);

         \draw[dashdotted, thick,<-, >=stealth'] (0,0) -- (10,10);
         \draw[color = blue, line width = 2] (-1,6) -- (6,6) -- (6,-1);
        \draw[color = blue, line width = 2] (-1,4) -- (4,4) --  (4,-1);
        \draw[color = blue, line width = 2] (-1,3) -- (3,3) --(3,-1);
        
        \fill [red] (2,3) circle (0.15);

      \end{scope}

  \begin{scope} [ shift = {(8,0.5)}, scale=0.55, x=1.0cm,y=1.0cm, 
      extended line/.style={shorten >=-10cm,shorten <=-10cm} ]
      \clip (-1.5,-1) rectangle (8,7);
     
     \coordinate (orange_green) at (1.8,0);
     \coordinate (green_purple) at (5,0);
     \coordinate (purple_blue) at (7,4);
     \coordinate (orange_kaki) at (0,3);
     \coordinate (kaki_blue) at (2,6);

       \fill[fill=lightgray, fill opacity = 0.3]
       (orange_green) -- (green_purple) -- (purple_blue) -- (kaki_blue) -- (orange_kaki) -- cycle;

     \coordinate (blue_dir) at ($ (kaki_blue) - (purple_blue)$);
     \coordinate (orange_dir) at ($ (orange_green) - (orange_kaki)$);
     \coordinate (green_dir) at ($ (orange_green) - (green_purple)$);
     \coordinate (purple_dir) at ($ (green_purple) - (purple_blue)$);
     \coordinate (kaki_dir) at ($ (orange_kaki) - (kaki_blue)$);

     \draw[color=myblue] (kaki_blue) -- + ($10*(blue_dir)$) -- + ($-10*(blue_dir)$);
     \filldraw [draw=none,pattern=north east lines,pattern color=myblue,fill opacity=0.5] 
      ($(kaki_blue) + 10*(blue_dir)$) -- ++(0,-0.3)-- ++ ($-20*(blue_dir)$) -- ++(0,0.3) -- cycle; 

      \draw[color=orange]  (orange_green) -- +  ($10*(orange_dir)$) -- + ($-10*(orange_dir)$);
       \filldraw[draw=none,pattern=north east lines,pattern color=orange,fill opacity=0.5] 
      ($(orange_green) + 10*(orange_dir)$) -- ++(0.3,0)-- ++ ($-20*(orange_dir)$) -- ++(-0.3,0) -- cycle; 

      \draw[color=mygreen] (orange_green) -- +  ($10*(green_dir)$) -- + ($-10*(green_dir)$);
       \filldraw[draw=none,pattern=north west lines,pattern color=mygreen,fill opacity=0.5] 
      ($(orange_green) + 10*(green_dir)$) -- ++(0,0.3)-- ++ ($-20*(green_dir)$) -- ++(0,-0.3) -- cycle; 

      \draw[color=purple]  (green_purple) -- +  ($10*(purple_dir)$) -- + ($-10*(purple_dir)$);;
       \filldraw[draw=none,pattern=north west lines,pattern color=purple,fill opacity=0.5] 
      ($(green_purple) + 10*(purple_dir)$) -- ++(-0.3,0)-- ++ ($-20*(purple_dir)$) -- ++(0.3,0) -- cycle; 

      \draw[color=kaki, extended line]  (orange_kaki) --  +  ($10*(kaki_dir)$) -- + ($-10*(kaki_dir)$);
      \filldraw[draw=none,pattern=north west lines,pattern color=kaki,fill opacity=0.5] 
      ($(orange_kaki) + 10*(kaki_dir)$) -- ++(0.3,0)-- ++ ($-20*(kaki_dir)$) -- ++(-0.3,0) -- cycle; 

      \draw[edge] (orange_kaki) -- (orange_green);
      \draw[edge] (orange_green) -- (green_purple);
      \draw[edge] (green_purple) -- (purple_blue);
      \draw[edge] (purple_blue) -- (kaki_blue);
      \draw[edge] (kaki_blue) -- (orange_kaki);

      \fill [basicPoint] (orange_green) circle (0.15);
      \fill [basicPoint] (green_purple) circle (0.15);
      \fill [basicPoint] (purple_blue) circle (0.15);
      \fill [basicPoint] (kaki_blue) circle (0.15);
     
      \coordinate (objective) at ($(7,2.5)$);
      \path let \p1=(objective) in
      coordinate (level_dir) at ($10*(-\y1, \x1)$);
      \draw[dashdotted, thick,<-, >=stealth'] ($(0,2) -2/10*(objective)$) -- ($(0,2)+10*(objective)$) ; 
      \draw[color = blue, line width = 2] (4.5,0) -- + (level_dir) -- + ($-1*(level_dir)$);
      \draw[color = blue, line width = 2] (2.5,0) -- + (level_dir) -- + ($-1*(level_dir)$);
      \draw[color = blue, line width = 2] (0,3) -- + (level_dir) -- + ($-1*(level_dir)$);
     
      \fill[red] (orange_kaki) circle(0.15cm);

\end{scope}

\end{tikzpicture}
\caption{A tropical linear program (on the left) and a lift of this program to Hahn series (on the right). Dotted lines represent objective functions and blue lines are level sets. Optimal basic points are red dots. Other basic points, and the edges, are depicted in black.}
\end{figure}

In the classical setting, reduced costs are used to determine which edge can be selected in order to improve the objective function. The reduced cost vector of a basis $I$ is the unique solution $\y^I$ of the system $\transpose{\A} \y +  \cc = 0$,  and $\y_i = 0 $ for all $i \in [m] \setminus I$. 
The entries of $\y^I$ are given by Cramer determinants of the latter system, and so they can be expressed in terms of some minors of the matrix $( \transpose{\A_I} \ \cc)$.

 Thanks to Assumption~\ref{ass:non_degeneracy}, their signed valuation can be determined in a tropical way, 
by computing the tropical analogue of Cramer determinants, see~\cite{tropical+simplex} for details.
This provides the tropical reduced cost vector. As a consequence, if the $i$-th entry of the latter vector is tropically negative (\ie~in $\trop_-$), pivoting along the edge of $\puiseuxP(\A, \b)$ associated with the set $K = I \setminus \{i\}$ decreases the objective function $\x \mapsto \transpose{\cc} \x$. 

\begin{remark}\label{remark:max}
The tropical simplex method can be used to maximize $x \mapsto \transpose{c} \tdot x$, as $\tropP(A, b)$ is bounded by Assumption~\ref{ass:finiteness}. It only suffices to select a leaving variable $i$ such that the $i$-th entry of reduced cost vector is positive, \ie\ to reverse the minimization.  All the conclusions of Theorem~\ref{th:tropical_simplex} also apply to the maximization problem. 
\end{remark}

\subsection{General tropical linear programs}\label{subsec:general_algorithm}

We now explain how to handle any tropical linear program $\tropLP(A,b,c)$ over $\trop[\R]$ ($\tropLP$ for short) when Assumptions~\ref{ass:finiteness} and~\ref{ass:non_degeneracy} are not necessarily satisfied. 

The optimal value of the previous problem is necessarily attained on a (tropically) extreme point of $\tropP(A,b)$. Indeed, it is known that a tropical
polyhedron admits an analogue of the Minkowski-Weyl description
by extreme points and rays~\cite{GK06}. The fact a linear form
achieves its minimum at an extreme point readily follows
from this result.
The entries of extreme points can be bounded as follows.
\begin{lemma}[{\cite[Prop.~10]{AllamigeonGaubertKatzLAA2011}}]
Let $x$ be an extreme point of $\tropP(A,b)$. Then, for all $j \in [n]$, we have $x_j \leq u$, where $u := 2n \max(\max_{ij} |A_{ij}|, \max_i |b_i|)$.
\end{lemma}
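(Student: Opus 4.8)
The plan is to bound the coordinates of an extreme point $x$ of $\tropP(A,b)$ directly, using the combinatorial structure of tropical extreme points rather than lifting to Hahn series. First I would recall (or reprove) the characterization of tropical extreme points: if $x$ is an extreme point of $\tropP(A,b) = \{x \in \trop^n \mid A^+\tdot x \tplus b^+ \geq A^-\tdot x \tplus b^-\}$, then the set of constraints $i$ that are \emph{active} at $x$ (i.e. $A_i^+\tdot x\tplus b_i^+ = A_i^-\tdot x\tplus b_i^-$), together with the ``bound'' constraints $x_j \geq \zero_\trop$ implicitly present, must have a certain full combinatorial rank at $x$ in the sense of tropical convexity (each finite coordinate $x_j$ is ``pinned'' by the active constraints). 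The point here is that each active constraint, by the reduction to the form where for each $j$ either $\alpha_j$ or $\beta_j$ is $\zero_\trop$, reads as an equality $\max_{j} (A^+_{ij} + x_j, b^+_i) = \max_{j}(A^-_{ij} + x_j, b^-_i)$ where for each column only one of $A^+_{ij}, A^-_{ij}$ is finite. So each active constraint is a ``tropical hyperplane equation'' whose coefficients all have modulus at most $M := \max(\max_{ij}|A_{ij}|, \max_i |b_i|)$.

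The core combinatorial step is then: show that from the system of active constraints one can extract, for each coordinate $j$ with $x_j \neq \zero_\trop$, a bound $x_j \leq 2n M$. The natural way is to track how the finite value of one coordinate is propagated through a chain of active tight equations. Concretely, if $x_j$ is finite and large, then in some active constraint $i$ where $A^+_{ij} + x_j$ (say) attains the maximum on the positive side, the negative side $\max_k(A^-_{ik}+x_k, b^-_i)$ must equal $A^+_{ij}+x_j$, hence some other coordinate $x_k$ satisfies $A^-_{ik}+x_k \geq A^+_{ij}+x_j$, i.e. $x_k \geq x_j - 2M$ (or $b^-_i \geq A^+_{ij}+x_j$, giving $x_j \leq b^-_i - A^+_{ij} \leq 2M$ immediately). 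Iterating this ``propagation'' produces a sequence of coordinates $j = j_0, j_1, j_2, \dots$ with $x_{j_{\ell+1}} \geq x_{j_\ell} - 2M$. If this sequence ever hits the constant-term escape (the $b$ case), we win with a bound of $2M \le 2nM$. If not, after at most $n$ steps it revisits a coordinate, producing a cycle; one then argues — using that $x$ is an \emph{extreme} point, not merely feasible — that such a cycle forces the involved coordinates to be ``free'' to slide, contradicting extremality unless they were already at the boundary, which forces a $b$-term to be hit somewhere and caps the value. Combining: every finite coordinate reaches a constant term within $n$ propagation steps, so $x_j \leq 2M + (n-1)\cdot 0 \cdot$(careful accounting) — more precisely $x_j \le x_{j_0} \le x_{j_\ell} + 2\ell M \le 2M + 2(n-1)M = 2nM =: u$.

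I expect the main obstacle to be exactly the cycle case: turning ``$x$ is extreme'' into the statement that a propagation cycle cannot sustain arbitrarily large coordinate values. The clean way is probably to invoke the Minkowski–Weyl-type description of tropical polyhedra (already cited from~\cite{GK06} in the surrounding text) — an extreme point cannot be written as a tropical combination of other points of $\tropP(A,b)$ — and to show that if a set $S$ of coordinates carries large values linked only through a cycle of active constraints with no constant-term contribution, then one can perturb (tropically) the values on $S$ downward while staying feasible and while writing $x$ as a genuine tropical combination of two feasible points, contradicting extremality. Alternatively, one can cite the referenced~\cite[Prop.~10]{AllamigeonGaubertKatzLAA2011} essentially verbatim, since the lemma statement is attributed there; in a self-contained write-up I would nonetheless give the propagation argument above and close the cycle case with the extremality/combination argument. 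The arithmetic bookkeeping to land exactly on $2nM$ (rather than, say, $2(n-1)M$ or $2(n+1)M$) is routine once the propagation-chain length bound $n$ is in hand, so I would not belabor it.
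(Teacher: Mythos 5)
The paper does not prove this lemma: it is stated as a citation of \cite[Prop.~10]{AllamigeonGaubertKatzLAA2011} and no argument is given in the present text, so there is no in-paper proof to compare your attempt against. Judged on its own merits, your propagation idea is the natural one, and the bookkeeping at the end ($x_{j_0}\le x_{j_L}+2LM\le 2M+2(n-1)M=2nM$ when the chain of distinct coordinates hits a constant term within $n-1$ steps) does land on the stated constant. But as written the argument has two genuine gaps. First, the very existence of the propagation step is asserted, not established: you need that for \emph{every} coordinate $j$ with $x_j$ finite there is an active constraint in which the term involving $x_j$ attains the maximum on its side, and this is exactly the kind of structural fact about tropical extreme points (expressed via the tangent/saturation digraph or the ``each finite coordinate is pinned'' property) that requires a precise statement and proof; you gesture at it but do not supply it, and without it the chain may simply fail to start.

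Second, and more seriously, the cycle case is not actually resolved. If the propagation chain revisits a coordinate, the telescoped inequalities only yield $0\le 2(L-L')M$, which carries no information, and your proposed fix --- perturb the cycle coordinates and write $x$ as a nontrivial tropical combination $x=\lambda_1\tdot x^1\tplus\lambda_2\tdot x^2$ of two feasible points --- is not justified as stated. Decreasing the cycle coordinates by $\epsilon$ can break feasibility in \emph{other} active constraints where those coordinates are the unique argmax on the left-hand side, and increasing them can break feasibility elsewhere; so neither direction of the slide obviously produces a feasible $x^1$ or $x^2$, and the tropical-convexity notion of extremality requires both points to lie in $\tropP(A,b)$. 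Closing this requires either a careful case analysis of which terms saturate in the non-cycle active constraints, or citing the precise characterization of extreme points (e.g.\ via the ``minimality in a direction'' criterion) rather than re-deriving it inline. Since you already acknowledge these as the sticking points and propose to fall back on citing the external reference, this is an honest sketch rather than a complete proof; the missing content is precisely the extreme-point characterization and the feasibility analysis in the cycle case.
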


As a consequence, the solution of $\tropLP(A,b,c)$ does not change if we explicitly add the constraints $x_j \leq u$. 

We may think that lower bound constraints could be added as well in order to satisfy Assumption~\ref{ass:finiteness}. Indeed, it can be shown that if $x$ is extreme and $x_j \neq \zero_{\trop}$, then $x_j \geq -u$. However, adding the latter constraints does not provide an equivalent problem. More precisely, the initial problem may be feasible, but $\tropP(A,b)$ may contain only points satisfying $x_j = \zero_{\trop}$. In this case, adding the constraint $x_j \geq -u$ would provide an infeasible problem.

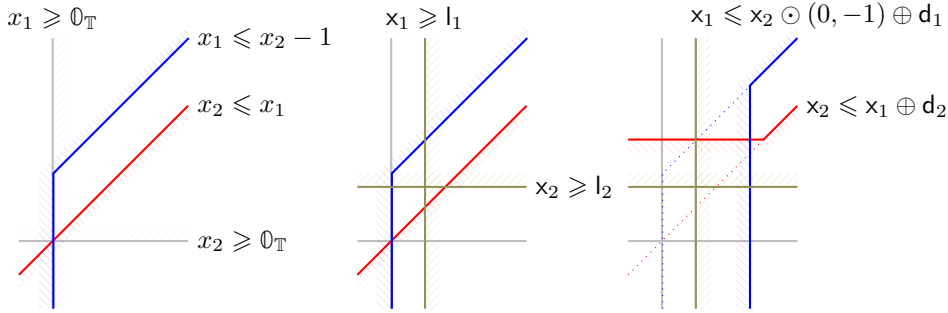
\begin{figure}
  \centering
  \begin{tikzpicture}[scale=0.9]
  
    \begin{scope}[shift={(0,0)}]
      \draw[gray!50!, thick] (-0.5,-0) -- (2,-0);
      \filldraw[draw=none,pattern=north east lines,pattern color= gray!50!,fill opacity=0.3] (-0.5,0) -- (2,0) -- (2,0.2) -- (-0.5,0.2) -- cycle;
      
      \draw[gray!50!, thick] (0,-1) -- (0,3);
      \filldraw[draw=none,pattern=north east lines,pattern color=gray!50!,fill opacity=0.3] (0,-1) -- (0,3) -- (0.2,3) -- (0.2,-1) -- cycle;

      \draw[red, thick] (-0.5,-0.5) -- (2,2);
      \filldraw[draw=none,pattern=north west lines,pattern color=red,fill opacity=0.3] (-0.5,-0.5) -- (2,2) -- (2,1.8) -- (-0.5,-0.7) --cycle;

      \draw[blue, thick] (0.01,-1) -- (0.01,1) -- (2.01,3);
      \filldraw[draw=none,pattern=north west lines,pattern color=blue,fill opacity=0.3] (0.01,-1) -- (0.01,1) -- (2.01,3) -- (2.01,3.2) -- (-0.21,1) -- (-0.21,-1)--cycle;

      \node[anchor = south] at (0,3) {$ x_1 \geq \zero_{\trop}$};
      \node[anchor = west] at (2,0) {$ x_2 \geq \zero_{\trop}$};

      \node[anchor = west] at (2,3) {$ x_1 \leq x_2 -1 $};
      \node[anchor = west] at (2,2) {$ x_2 \leq x_1  $};

    \end{scope}

    \begin{scope}[shift={(5,0)}]
      \draw[gray!50!, thick] (-0.5,-0) -- (2,-0);
      
      \draw[gray!50!, thick] (0,-1) -- (0,3);

      \draw[red, thick] (-0.5,-0.5) -- (2,2);
      \filldraw[draw=none,pattern=north west lines,pattern color=red,fill opacity=0.3] (-0.5,-0.5) -- (2,2) -- (2,1.8) -- (-0.5,-0.7) --cycle;

      \draw[blue, thick] (0.01,-1) -- (0.01,1) -- (2.01,3);
      \filldraw[draw=none,pattern=north west lines,pattern color=blue,fill opacity=0.3] (0.01,-1) -- (0.01,1) -- (2.01,3) -- (2.01,3.2) -- (-0.21,1) -- (-0.21,-1)--cycle;

        \draw[yellow!50!black, thick] (-0.5,0.8) -- (2,0.8);
      \filldraw[draw=none,pattern=north east lines,pattern color= yellow!50!black,fill opacity=0.3] (-0.5,0.8) -- (2,0.8) -- (2,1) -- (-0.5,1) -- cycle;
      
      \draw[yellow!50!black, thick] (0.5,-1) -- (0.5,3);
      \filldraw[draw=none,pattern=north east lines,pattern color=yellow!50!black,fill opacity=0.3] (0.5,-1) -- (0.5,3) -- (0.7,3) -- (0.7,-1) -- cycle;

      \node[anchor = south] at (0.5,3) {$ \Ipert{x}_1 \geq \Ipert{l}_1$};
      \node[anchor = west] at (2,0.8) {$ \Ipert{x}_2 \geq \Ipert{l}_2$};
      
    \end{scope}

    \begin{scope}[shift={(9,0)}]
      \draw[gray!50!, thick] (-0.5,-0) -- (2,-0);
      
      \draw[gray!50!, thick] (0,-1) -- (0,3);

      \draw[red, dotted] (-0.5,-0.5) -- (2,2);
      \draw[red, thick] (-0.5,1.5) -- (1.5,1.5) -- (2,2);
      \filldraw[draw=none,pattern=north west lines,pattern color=red,fill opacity=0.3] (-0.5,1.5) -- (1.5,1.5) -- (2,2) -- (2,1.8) -- (1.5, 1.3) -- (-0.5,1.3) --cycle;

      \draw[blue, dotted] (0.01,-1) -- (0.01,1) -- (2.01,3);
      \draw[blue, thick] (1.3,-1) -- (1.3,2.3) -- (2,3);
      \filldraw[draw=none,pattern=north west lines,pattern color=blue,fill opacity=0.3] (1.3,-1) -- (1.3,2.3) -- (2,3) -- (2,3.2) -- (1.1,2.3) -- (1.1,-1)--cycle;

        \draw[yellow!50!black, thick] (-0.5,0.8) -- (2,0.8);
      \filldraw[draw=none,pattern=north east lines,pattern color= yellow!50!black,fill opacity=0.3] (-0.5,0.8) -- (2,0.8) -- (2,1) -- (-0.5,1) -- cycle;
      
      \draw[yellow!50!black, thick] (0.5,-1) -- (0.5,3);
      \filldraw[draw=none,pattern=north east lines,pattern color=yellow!50!black,fill opacity=0.3] (0.5,-1) -- (0.5,3) -- (0.7,3) -- (0.7,-1) -- cycle;

       \node[anchor = south] at (2.3,3) {$ \Ipert{x}_1 \leq \Ipert{x}_2 \ttimes (0, -1) \tplus \Ipert{d}_1 $};
       \node[anchor = west] at (2,2) {$ \Ipert{x}_2 \leq \Ipert{x}_1 \tplus \Ipert{d}_2  $};

    \end{scope}

  \end{tikzpicture}
  \caption{Illustration of a tropical polyhedron with $\zero_{\trop}$ entries (left).  Two perturbation steps (middle and right); see
    Example~\ref{exmp:perturbation}.}
\label{fig:perturbation}
\end{figure}

To overcome this difficulty, we propose to lift our initial problem into a richer semiring, where we can replace the $\zero_{\trop}$ coefficients by finite entries. This semiring $\Igerms$ is defined as $\trop[\R^2]$, where the group $\R^2$ is endowed with entry-wise addition and lexicographical order. Intuitively, a pair $(\alpha, \beta)$ corresponds to a scalar $\alpha M + \beta$, where $M$ is an infinite formal value. 
Finite elements $\beta$ of $\trop$ are encoded as scalars of the form $(0, \beta)$. In contrast, the elements of $\Igerms$ of the form $(\alpha, \cdot)$ with $\alpha \neq 0$ correspond to different layers of infinite values, namely $-\infty$ if $\alpha < 0$, and $+\infty$ if $\alpha > 0$. Finally, the semiring $\Igerms$ has its own bottom element, denoted by $\zero_{\Igerms}$.
Following this interpretation,  we say that $x=(\alpha, \cdot)$ is \emph{infinitely smaller} than $y =(\alpha', \cdot)$ if $\alpha < \alpha'$. In this case, we write $x \ll y$. By extension, we set $ \zero_{\Igerms} \ll (\alpha, \cdot)$ for any $\alpha \in \R$.

We now lift the matrix $A \in \strop^{m \times n}$ into $\Ipert{A}  \in \sIgerms^{m \times n}$, defined as follows:
\begin{equation*}
  \Iperturb{A}_{ij} = 
  \begin{cases}
    (0, |A_{ij}|) & \text{if } A_{ij} \text{ is tropically positive} \ ,\\
\tminus  (0, |A_{ij}|) & \text{if } A_{ij} \text{ is tropically negative} \ ,\\
\zero_{\Igerms} & \text{if } A_{ij} = \zero_{\trop} \ .
  \end{cases}
\end{equation*}
The vectors $\Ipert{b} \in \sIgerms^m$, $\Ipert{c} \in \Igerms^n $ and the scalar $\Ipert{u} \in \Igerms$ are built from $b$, $c$ and $u$ similarly. 

The idea is to consider the polyhedron $\tropP(\Ipert{A}, \Ipert{b})$ with the additional constraints $\Ipert{u} \geq \Ipert{x}_j \geq \Ipert{l}_j$, where the $\Ipert{l}_j$ are ``infinitely small'' but finite entries, \ie\ of the form $(\alpha_j, \cdot)$ with $\alpha_j < 0$. 
We want to lift any $x \in \tropP(A,b)$ to an element $\Ipert{x}$ defined by $\Ipert{x}_j = (0, x_j)$ if $x_j \neq \zero_{\trop}$,  and $\Ipert{x}_j =l_j$ otherwise.  However, such a lift $\Ipert{x}$ may not satisfy the inequality $\Ipert{A}^+_i \tdot \Ipert{x} \tplus \Ipert{b}^+_i \geq \Ipert{A}^-_i \tdot \Ipert{x} \tplus \Ipert{b}^-_i$ when $\Ipert{b}_i = \zero_{\Igerms}$.
We circumvent this difficulty by perturbing  $\Ipert{b}^+$ into $\Ipert{b}^+ \tplus \Ipert{d}$, where $\Ipert{d} \in \Igerms^m$ is chosen such that 
$\max_j \Ipert{l}_j \ll \Ipert{d}_i \ll \unit_{\Igerms}$ for all $i \in [m]$ (in particular, $\Ipert{d}_i \neq \zero_\Igerms$).
\begin{example}\label{exmp:perturbation}
  Consider the tropical polyhedron $\tropP$ given by the 
  tropical linear inequalities 
  $x_1\le x_2-1$ 
  and $x_2\le x_1$.  These constraints violate Assumption~\ref{ass:finiteness}, and
  $\tropP$ consists of the single point $(\zero_{\trop},\zero_{\trop})$; see Figure~\ref{fig:perturbation} (left).  Cast into $\Igerms$, and
  with the additional constraints $\Ipert{x}_j \geq \Ipert{l}_j$, the resulting tropical polyhedron is empty; see
  Figure~\ref{fig:perturbation} (middle).  With the additional affine perturbation $\Ipert{d}$ we obtain the tropical
  polyhedron defined by the  tropical linear inequalities $ \Ipert{x}_1 \leq \Ipert{x}_2 \ttimes (0, -1) \tplus
  \Ipert{d}_1 $ and $ \Ipert{x}_2 \leq \Ipert{x}_1 \tplus \Ipert{d}_2 $ along with $\Ipert{x}_j \geq \Ipert{l}_j$; see Figure~\ref{fig:perturbation} (right).
\end{example}

 To summarize, the problem we are considering now is the following:
\begin{equation}   \label{pb:IpertLP} 
\begin{array}{ll}
\text{\rm  minimize} & \, \transpose{\Ipert{c}} \tdot \Ipert{x} \ \\
\text{\rm subject to} &
\begin{aligned}[t]
\Ipert{A}^+ \tdot \Ipert{x} \tplus (\Ipert{b}^+ \tplus \Ipert{d}) & \geq \Ipert{A}^- \tdot \Ipert{x} \tplus \Ipert{b}^- \\
\Ipert{u} & \geq \Ipert{e} \tdot \Ipert{x} \\
\Ipert{x} & \geq \Ipert{l} \ ,
\end{aligned}
\end{array}
\tag{$\IpertLP$}
\end{equation}
where $\Ipert{e}$ is the row vector of size $n$ with all entries equals to $\unit_{\Igerms}$.
Because of the constraints $\Ipert{u} \geq \Ipert{x}_j \geq \Ipert{l}_j$,  the feasible points of $\IpertLP$ have entries  of the form $(\alpha, \beta)$ with $\alpha \leq 0$. We project these elements to $\trop$ with the map $\second$, defined  by 
$\second(0, \beta) = \beta$, and $ \second(\alpha, \beta) =  \zero_{\trop}$  for $\alpha < 0$. The map $\second$ is extended to vectors entry-wise.
\begin{proposition}\label{prop:IpertLP}
The image under $\second$ of the feasible set of $\IpertLP$ is precisely the  feasible set of $\tropLP(A,b,c)$.
Moreover, if $\Ipert{x}$ is an optimal solution of $\IpertLP$, then $\second(\Ipert{x})$ is an optimal solution of $\tropLP(A,b,c)$.
\end{proposition}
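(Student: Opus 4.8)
The plan is to reduce everything to one structural fact about the projection. The map $\second$ is defined only on the sub-semiring $\Igerms_{\leq 0} := \{\zero_{\Igerms}\} \cup \{(\alpha,\beta) : \alpha \leq 0\}$ of $\Igerms$ (call $\alpha$ the \emph{layer} of $(\alpha,\beta)$, and set $\second(\zero_{\Igerms}) = \zero_{\trop}$ by convention), and there it is an order-preserving morphism of semirings onto $\trop$. This is a routine check: for $(\alpha,\beta),(\alpha',\beta') \in \Igerms_{\leq 0}$ the product $(\alpha,\beta)\ttimes(\alpha',\beta') = (\alpha+\alpha',\beta+\beta')$ is sent by $\second$ to $\beta+\beta'$ when $\alpha=\alpha'=0$ and to $\zero_{\trop}$ otherwise, which is exactly $\second(\alpha,\beta)\ttimes\second(\alpha',\beta')$; the behaviour with respect to $\tplus$ (the lexicographic maximum) and to the order, as well as the case where one argument is $\zero_{\Igerms}$, are equally immediate. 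Now every entry of $\Ipert{A}^{\pm}$, $\Ipert{b}^{\pm}$, $\Ipert{c}$, $\Ipert{d}$, $\Ipert{l}$, and of $\Ipert{u}$ lies in $\Igerms_{\leq 0}$, with $\second(\Ipert{A}^{\pm}_{ij}) = A^{\pm}_{ij}$, $\second(\Ipert{b}^{\pm}_i) = b^{\pm}_i$, $\second(\Ipert{c}_j) = c_j$, $\second(\Ipert{u}) = u$, while $\second(\Ipert{d}_i) = \zero_{\trop}$ and $\second(\Ipert{l}_j) = \zero_{\trop}$ since $\Ipert{d}_i$ and $\Ipert{l}_j$ have layer $< 0$. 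Moreover the constraints $\Ipert{u} \geq \Ipert{x}_j \geq \Ipert{l}_j$ force every feasible point $\Ipert{x}$ of $\IpertLP$ into $\Igerms_{\leq 0}^{\,n}$, so $\second$ acts as a morphism on all expressions occurring in $\IpertLP$. Applying $\second$ to the first constraint of $\IpertLP$ and using $\second(\Ipert{d}) = \zero_{\trop}$ then yields $A^+ \tdot x \tplus b^+ \geq A^- \tdot x \tplus b^-$ for $x := \second(\Ipert{x})$, and $\second(\Ipert{u}) = u$ gives $x_j \leq u$; thus $x$ is feasible for $\tropLP(A,b,c)$ (where, as discussed before the lemma on extreme points, $\tropLP(A,b,c)$ is taken to carry the constraints $x_j \leq u$, which leaves its optimal solutions unchanged). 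This proves the inclusion ``$\subseteq$''.

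For the reverse inclusion, I would take $x$ feasible for $\tropLP(A,b,c)$ and lift it to $\Ipert{x}$ by $\Ipert{x}_j := (0,x_j)$ if $x_j \neq \zero_{\trop}$ and $\Ipert{x}_j := \Ipert{l}_j$ otherwise, so that $\second(\Ipert{x}) = x$ and the bounds $\Ipert{u} \geq \Ipert{x}_j \geq \Ipert{l}_j$ hold (using $x_j \leq u$ and that $\Ipert{l}_j$ has negative layer). The heart of the matter is the $i$-th row of the first constraint, $\Ipert{A}^+_i \tdot \Ipert{x} \tplus (\Ipert{b}^+_i \tplus \Ipert{d}_i) \geq \Ipert{A}^-_i \tdot \Ipert{x} \tplus \Ipert{b}^-_i$, which I would verify by splitting on the layer of the right-hand side. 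If the right-hand side lies in layer $0$, its value there equals $(A^-\tdot x \tplus b^-)_i \neq \zero_{\trop}$; feasibility of $x$ forces $(A^+\tdot x\tplus b^+)_i$, which dominates it, to be finite too, so $\Ipert{A}^+_i \tdot \Ipert{x} \tplus \Ipert{b}^+_i$ already sits in layer $0$ with value $(A^+\tdot x\tplus b^+)_i$, the perturbation $\Ipert{d}_i$ (layer $< 0$) is absorbed, and the inequality reduces to the tropical one satisfied by $x$. If instead the right-hand side has layer $< 0$, then $\Ipert{b}^-_i = \zero_{\Igerms}$ and every surviving term on the right is a product of a matrix entry with some $\Ipert{l}_j$, so the right-hand side has layer at most that of $\max_j \Ipert{l}_j$, which is $\ll \Ipert{d}_i$; since $\Ipert{d}_i$ is a summand of the left-hand side, the inequality holds. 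Hence $\Ipert{x}$ is feasible and $x$ lies in the image, completing ``$\supseteq$''.

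For the optimality statement, let $\Ipert{x}$ be optimal for $\IpertLP$ and put $x := \second(\Ipert{x})$, feasible for $\tropLP(A,b,c)$ by the above. Were there a feasible $x'$ with $\transpose{c}\tdot x' < \transpose{c}\tdot x$, lifting $x'$ to a feasible point $\Ipert{z}$ of $\IpertLP$ as above would give, by the morphism property, $\second(\transpose{\Ipert{c}}\tdot\Ipert{z}) = \transpose{c}\tdot x' < \transpose{c}\tdot x = \second(\transpose{\Ipert{c}}\tdot\Ipert{x})$; since $\second$ is order-preserving and the lexicographic order is total, this forces $\transpose{\Ipert{c}}\tdot\Ipert{z} < \transpose{\Ipert{c}}\tdot\Ipert{x}$, contradicting the optimality of $\Ipert{x}$. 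I expect the main obstacle to be the reverse-inclusion step — specifically, checking that the lift of a tropical feasible point still satisfies the \emph{perturbed} constraint on rows $i$ that degenerate (those with $\Ipert{b}_i = \zero_{\Igerms}$ and with every active right-hand term carrying a coordinate $x_j = \zero_{\trop}$): this is exactly the configuration the affine perturbation $\Ipert{d}$ is designed to repair, and correctly ordering the layers along the chain $\max_j \Ipert{l}_j \ll \Ipert{d}_i \ll \unit_{\Igerms}$ uniformly in $i$ is the delicate part; everything else is the bookkeeping behind the morphism claim of the first paragraph.
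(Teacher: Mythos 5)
Your proof is correct and follows essentially the same route as the paper's: the forward inclusion (and the optimality claim) come from the observation that $\second$ restricted to the sub-semiring $\Igerms^{\leq}$ is an order-preserving semiring morphism onto $\trop$, and the reverse inclusion comes from the explicit lift $\Ipert{x}_j = (0,x_j)$ or $\Ipert{l}_j$ with a case split on where the right-hand side of the $i$-th constraint lands. The only cosmetic difference is that you organize the lift's case analysis by the layer of the right-hand side, whereas the paper splits on whether $b_i = \zero_{\trop}$ and whether $A^+_i \tdot x \tplus b^+_i$ is finite—a functionally identical dichotomy.
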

\begin{proof}
Let us show that for any $x \in \tropP(A,b)$,  the feasible set of $\IpertLP$ contains the lift  $\Ipert{x}$ defined by $\Ipert{x}_j = (0, x_j)$ if $x_j \neq \zero_{\trop}$,  and $\Ipert{x}_j =\Ipert{l}_j$ otherwise.
 This point clearly satisfies the constraint $\Ipert{A}^+_i \tdot \Ipert{x} \tplus \Ipert{b}^+_i  \tplus \Ipert{d}^+_i \geq \Ipert{A}^-_i \tdot \Ipert{x} \tplus \Ipert{b}^-_i$ if $b_i \neq \zero_{\trop}$. Indeed, the inequality $A_i^+ \tdot x \tplus b^+_i \geq A_i^- \tdot x \tplus b^-_i$ ensures that $A_i^+ \tdot x \tplus b^+_i \neq \zero_{\trop}$, in which case we have: 
\[
\Ipert{A}^+_i \tdot \Ipert{x} \tplus \Ipert{b}^+_i = (0, A_i^+ \tdot x \tplus b^+_i) \ .
\] 
Besides, it can be verified that $\Ipert{A}^-_i \tdot \Ipert{x} \tplus \Ipert{b}^-_i$ is either equal to $(0, A^-_i \tdot x \tplus b^-_i)$ if $A^-_i \tdot x \tplus b^-_i \neq \zero_{\germs}$, and otherwise it is  of the form $(\alpha, \cdot)$  with $\alpha < 0$, or equal to $\zero_{\Igerms}$

It remains to consider the case $b_i = \zero_{\trop}$. If $A^+_i \tdot x \tplus b^+_i \neq \zero_{\trop}$, then the arguments above are still valid. Otherwise,  $ A_i^- \tdot x \tplus b^-_i = \zero_{\trop}$ and thus  $\Ipert{A}^-_i \tdot \Ipert{x} \tplus \Ipert{b}^-_i$ is either $\zero_{\Igerms}$ or of the form $\tsum_{j} (0, A_{ij}) \ttimes \Ipert{l}_j$. Since $\Ipert{d}_i \gg \Ipert{l}_j$, we deduce that $\Ipert{d}_i \geq \Ipert{A}^-_i \tdot \Ipert{x} \tplus \Ipert{b}^-_i$.

We next show that conversely, the image by $\rho$ of every feasible point
for $\IpertLP$ is a feasible point for $\tropLP(A,b,c)$. To see
this, let $\Igerms^{\leq}$ denote the subset of
$\Igerms$ consisting of the elements $(\alpha,\beta)$
with $\alpha\leq 0$, together with $\zero_{\Igerms}$. We observe
that $\Igerms^{\leq}$ is a subsemiring of $\Igerms$, and that
all the coefficients involved in $\IpertLP$, as well
as the entries of any feasible point of this program,
belongs to $\Igerms^{\leq}$. The announced
property and the rest of the proposition
follow from the fact that $\rho$ is a homomorphism
of semiring from $\Igerms^{\leq} $ to $\trop$,
noting that this implies in particular that $\rho$
is order preserving.
\end{proof}

Problem $\IpertLP$ satisfies Assumption~\ref{ass:finiteness}. It now remains to deal with Assumption~\ref{ass:non_degeneracy}. We propose to handle it by, again, embedding $\Igerms = \trop[\R^2]$ into a richer semiring.   Consider $ \germs :=\trop[\R^2 \times \groupPrime]$, where $\groupPrime$ is an abelian totally ordered group. The elements of $\groupPrime$  encode infinitesimal symbolic perturbations. We suppose that $\R^2 \times \groupPrime$ is ordered lexicographically, and that it is equipped with the coordinate-wise addition. 

Given $\Ipert{M}  \in \sIgerms^{ p \times q}$ and $\epsMat = (\epsMatEnt_{ij}) \in \groupPrime^{p \times q}$, we define the matrix $\pert[\epsMat]{M} = (\pert{M}_{ij})$ of size $p \times q$ with entries in $\sgerms$ as follows:
\begin{align*}
  \pert{M}_{ij} = 
  \begin{cases}
    (|\Ipert{M}_{ij}|, \epsMatEnt_{ij}) & \text{if} \  \Ipert{M}_{ij} \ \text{is tropically positive} \\
\tminus (|\Ipert{M}_{ij}|, - \epsMatEnt_{ij}) & \text{if} \ \Ipert{M}_{ij} \ \text{is tropically negative} \\
    \zero_{\germs} &  \text{if} \ \Ipert{M}_{ij} = \zero_{\Igerms}
  \end{cases}
\end{align*}
 The matrix $\epsMat$ is said to be \emph{sufficiently generic} if $\pert[\epsMat]{M}$ is tropically generic for all matrix $\Ipert{M}$. The following lemma provides an example of such a 
 matrix $\epsMat$.
\begin{lemma}\label{lemma:perturbation}
Let $p, q \geq 1$, and instantiate $\groupPrime$ by $\mathbb{R}^{q}$. Consider the matrix $\epsMat$ whose $(i,j)$-th entry is the vector $i \delta^j$, where $\delta^j$ is the $j$-th element of the canonical basis of $\mathbb{R}^{q}$. 
Then $\epsMat$ is sufficiently generic.
\end{lemma}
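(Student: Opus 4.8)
The plan is to unwind the definition: $\epsMat$ is \emph{sufficiently generic} precisely when, for every matrix $\Ipert{M} \in \sIgerms^{p\times q}$, the matrix $\pert[\epsMat]{M}$ is tropically generic, i.e.\ every \emph{square} submatrix $W$ of $\pert[\epsMat]{M}$ either has $\tper W = \zero_{\germs}$ or admits a unique permutation attaining the maximum in $\tper W$. So I would fix $\Ipert{M}$ and a square submatrix $W$ of $\pert[\epsMat]{M}$ sitting on rows $i_1 < \dots < i_r$ and columns $j_1 < \dots < j_r$, assume $\tper W \ne \zero_{\germs}$, and make explicit the weight of each permutation. For $\sigma \in \Sym([r])$ with all entries $W_{k\sigma(k)}$ different from $\zero_{\germs}$, the definition of $\pert[\epsMat]{M}$ together with $\groupPrime = \R^q$ and $\epsMatEnt_{ij} = i\delta^j$ gives that the weight $|W_{1\sigma(1)}| \ttimes \dots \ttimes |W_{r\sigma(r)}|$, viewed in $\germs = \trop[\R^2 \times \R^q]$, is the vector whose $\R^2$-block is $\sum_{k=1}^r |\Ipert{M}_{i_k j_{\sigma(k)}}|$ and whose $\R^q$-block is $w_\sigma := \sum_{k=1}^r \sign(\Ipert{M}_{i_k j_{\sigma(k)}})\, i_k\, \delta^{j_{\sigma(k)}}$; permutations hitting a $\zero_{\germs}$ entry contribute $\zero_{\germs}$ and can be ignored, since $\tper W \ne \zero_{\germs}$ guarantees at least one permutation with finite weight.

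The crucial — and essentially the only substantive — step is to check that the map $\sigma \mapsto w_\sigma$ is injective. The point is that the coordinate of $w_\sigma$ indexed by the column $j_l$ receives a contribution only from the index $k = \sigma^{-1}(l)$, hence equals $\sign(\Ipert{M}_{i_{\sigma^{-1}(l)} j_l})\, i_{\sigma^{-1}(l)}$, whose absolute value is $i_{\sigma^{-1}(l)}$. Since $i_1, \dots, i_r$ are pairwise distinct positive integers, reading off these absolute values recovers $\sigma^{-1}(l)$ for every $l$, hence $\sigma$. Informally, the perturbation matrix $\epsMat$ has been rigged so that the $\groupPrime$-part of the weight of an assignment encodes the assignment itself.

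The conclusion then follows from the fact that $\R^2 \times \groupPrime$ carries the lexicographic order in which the $\R^2$-block outranks the $\groupPrime$-block: $\tper W$ is the maximum, over the permutations with finite weight, of the vectors above, so it is obtained by first keeping the permutations that maximize the $\R^2$-block $\sum_k |\Ipert{M}_{i_k j_{\sigma(k)}}|$ and then, among those, taking the one maximizing $w_\sigma$. Because the $w_\sigma$ are pairwise distinct by the previous step, this tie-break singles out exactly one permutation, so $W$ is tropically non-singular. Letting $W$ and $\Ipert{M}$ vary proves that $\epsMat$ is sufficiently generic. I do not anticipate a real obstacle: the lexicographic order places the perturbation in the least significant coordinates, so it never alters which assignments are optimal at the level of $\Igerms$ yet is fine enough to break every tie; the only thing that needs care is the short index computation in the injectivity step.
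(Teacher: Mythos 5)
Your proof is correct and takes essentially the same route as the paper: both arguments hinge on the observation that the $\groupPrime$-block of the weight of a permutation $\sigma$ encodes $\sigma$ itself (via the coordinate $\pm i_{\sigma^{-1}(l)}$ at $\delta^{j_l}$), so equality of the weights of two max-attaining permutations forces equality of permutations. The paper states this more tersely, writing directly that $\sum_j \pm\sigma^{-1}(j)\delta^j = \sum_j \pm\pi^{-1}(j)\delta^j$ implies $\sigma = \pi$; you supply the same coordinate-by-coordinate reasoning and additionally make the lexicographic two-stage structure of the maximum explicit, which is a helpful gloss but not a genuinely different argument.
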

\begin{proof}
Consider a submatrix $\pert{M}'$ of $\pert[\epsMat]{M}$ with $\tper \pert{M}' \neq \zero_{\germs}$. If $\sigma$ and $\pi$ are two bijections attaining the maximum in $\tper \pert{M}'$, then $\sum_{j} \pm \sigma^{-1}(j) \delta^j =  \sum_{j} \pm \pi^{-1}(j) \delta^j$. The latter  vector equality  holds  if and only if $\sigma = \pi$.
\end{proof}

In the following, we suppose that $\groupPrime = \R^{n+1}$, and $\epsMat \in \groupPrime^{(m+n+2) \times (n+1)}$ is the matrix described in Lemma~\ref{lemma:perturbation}. We consider the problem:
\begin{equation*}   
\begin{array}{ll}
\text{\rm  minimize} & \, \transpose{\pert{c}} \tdot \pert{x} \ \\
\text{\rm subject to} &
\begin{aligned}[t]
\pert{A}^+ \tdot \pert{x} \tplus (\pert{b}^+ \tplus \pert{d}) & \geq \pert{A}^- \tdot \pert{x} \tplus \pert{b}^- \\
\pert{u} & \geq \pert{e} \tdot \pert{x} \\
\pert{Id}_n \tdot \pert{x} & \geq \pert{l} \ ,
\end{aligned}
\end{array}
\tag{$\pertLP$}\label{pb:pertLP} 
\end{equation*}
where we have set:
\[
\begin{pmatrix} 
\pert{A} & \pert{b} \tplus \pert{d} \\
\tminus \pert{e} & \pert{u} \\
\pert{Id}_n & \tminus \pert{l} \\
\transpose{\pert{c}} & \zero_{\germs} 
\end{pmatrix}
:=  
\pert[\epsMat]{\begin{pmatrix} 
\Ipert{A} & \Ipert{b} \tplus \Ipert{d} \\
\tminus \Ipert{e} & \Ipert{u} \\
\Ipert{Id}_n & \tminus \Ipert{l} \\
\transpose{\Ipert{c}} & \zero_{\Igerms} 
\end{pmatrix}}
\ . \footnotemark
\]
\footnotetext{By abuse of notation, $\Ipert{b} \tplus \Ipert{d}$ is the vector whose $i$-th entry is $\Ipert{b}_i$ if $\Ipert{b}_i \neq \zero_{\Igerms}$ and $\Ipert{d}_i$ otherwise.}

By construction, $\pertLP$ satisfies Assumptions~\ref{ass:finiteness} and~\ref{ass:non_degeneracy}. Moreover, the feasible points of~$\pertLP$ have entries of the form $(\Ipert{x}, \epsMatEnt)$, where $\Ipert{x} \in \R^2$ and $\epsMatEnt \in \groupPrime$.  Consider the map $\pi: \R^2 \times \groupPrime \mapsto \R^2$ defined by $\first(\Ipert{x}, \epsMatEnt ) = \Ipert{x}$. This map is extended to vectors entry-wise.

\begin{proposition}\label{prop:germLP}
Assume that the entries of $\epsMat$ are greater than $0_\groupPrime$. Then Problem~$\pertLP$ is feasible if, and only if, $\tropLP(A,b,c)$ is feasible. 
Besides, if $\pert{x} $ is an optimal solution of~$\pertLP$, then
$\second(\first(\pert{x}))$ is an optimal solution of $\tropLP(A,b,c)$.
\end{proposition}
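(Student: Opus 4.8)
The plan is to derive the statement from Proposition~\ref{prop:IpertLP} by showing that Problem~$\pertLP$ is, after forgetting the symbolic perturbation coordinate, the same program as Problem~$\IpertLP$. Write $\germs=\trop[\R^2\times\groupPrime]$, and let $\first\colon\germs\to\Igerms$ be the map induced by the group projection $\R^2\times\groupPrime\to\R^2$, with $\first(\zero_\germs):=\zero_\Igerms$. Since $\R^2\times\groupPrime$ carries the lexicographic order with $\groupPrime$ in last position, $\first$ is a surjective, order-preserving homomorphism of semirings; in particular $\first(x)<\first(y)$ implies $x<y$. The first thing I would record is that applying $\first$ entrywise to each matrix and vector occurring in $\pertLP$ returns exactly the corresponding datum of $\IpertLP$ --- that is, $\first(\pert{A}^{\pm})=\Ipert{A}^{\pm}$, $\first(\pert{b}^{\pm})=\Ipert{b}^{\pm}$, $\first(\pert{d})=\Ipert{d}$, $\first(\pert{c})=\Ipert{c}$, and likewise for $\pert{e},\pert{u},\pert{l},\pert{Id}_n$ --- which is immediate from the definition of $\pert[\epsMat]{M}$.

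With this in hand, the easy half is quick: for any $\pert{x}$ feasible for $\pertLP$, pushing each of its constraints through the monotone homomorphism $\first$ shows that $\first(\pert{x})$ is feasible for $\IpertLP$ and that $\first(\transpose{\pert{c}}\tdot\pert{x})=\transpose{\Ipert{c}}\tdot\first(\pert{x})$. Proposition~\ref{prop:IpertLP} then makes $\second(\first(\pert{x}))$ feasible for $\tropLP(A,b,c)$; in particular feasibility of $\pertLP$ implies feasibility of $\tropLP(A,b,c)$, and if $\pert{x}$ is optimal for $\pertLP$ then $\first(\pert{x})$ is at least feasible for $\IpertLP$.

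The heart of the argument will be the converse lift: every feasible point $\Ipert{y}$ of $\IpertLP$ equals $\first(\pert{y})$ for some $\pert{y}$ feasible for $\pertLP$. Because $\Ipert{y}\geq\Ipert{l}$ and $\Ipert{l}_j\neq\zero_\Igerms$, no coordinate of $\Ipert{y}$ is $\zero_\Igerms$, so I would simply set $\pert{y}_j:=(\Ipert{y}_j,0_\groupPrime)$. To check a constraint of $\pertLP$ at $\pert{y}$, write it as $L\geq R$ with $L$ the side bearing the $\geq$; applying $\first$ turns it into the matching $\IpertLP$-constraint at $\Ipert{y}$, i.e.\ $\first(L)\geq\first(R)$, which holds. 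If this is strict, then $L>R$ in $\germs$ by lexicographic comparison of $\R^2$-parts. If it is an equality with common value $\zero_\Igerms$, then $L=R=\zero_\germs$ and the constraint is trivial. If it is an equality with common value $v\neq\zero_\Igerms$, then I would invoke the sign asymmetry built into $\pert[\epsMat]{M}$ (and the choice of $\pert{y}$): every nonzero monomial of $L$ has $\groupPrime$-part of the form $+\epsMatEnt_{ij}>0_\groupPrime$, since the perturbation shifts tropically positive entries \emph{upward} and the coordinates of $\pert{y}$ carry $\groupPrime$-part $0$, whereas every nonzero monomial of $R$ has $\groupPrime$-part of the form $-\epsMatEnt_{ij}<0_\groupPrime$, coming from tropically negative entries shifted \emph{downward}; as the maxima defining $L$ and $R$ are attained at nonzero monomials (because $v\neq\zero_\Igerms$), the $\groupPrime$-part of $L$ lies above $0_\groupPrime$ and that of $R$ below it, so $L>R$. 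Carrying this out uniformly over the three families of constraints of $\pertLP$, and checking that it is precisely here that the hypothesis ``$\epsMat$ has entries greater than $0_\groupPrime$'' is needed, is what I expect to be the only real obstacle; the rest is bookkeeping with $\first$.

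To finish, I would assemble the pieces. For feasibility: if $\tropLP(A,b,c)$ is feasible, Proposition~\ref{prop:IpertLP} makes $\IpertLP$ feasible, and the lift above then makes $\pertLP$ feasible; together with the easy half, this yields the claimed equivalence. For optimality: let $\pert{x}$ be optimal for $\pertLP$. Then $\first(\pert{x})$ is feasible for $\IpertLP$, and I claim it is optimal there. Indeed, were some feasible $\Ipert{y}$ of $\IpertLP$ to satisfy $\transpose{\Ipert{c}}\tdot\Ipert{y}<\transpose{\Ipert{c}}\tdot\first(\pert{x})$ in $\Igerms$, then lifting $\Ipert{y}$ to $\pert{y}$ as above and using $\first(\transpose{\pert{c}}\tdot\pert{y})=\transpose{\Ipert{c}}\tdot\Ipert{y}$, $\first(\transpose{\pert{c}}\tdot\pert{x})=\transpose{\Ipert{c}}\tdot\first(\pert{x})$ and the implication $\first(a)<\first(b)\Rightarrow a<b$, one would get $\transpose{\pert{c}}\tdot\pert{y}<\transpose{\pert{c}}\tdot\pert{x}$, contradicting optimality of $\pert{x}$. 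Hence $\first(\pert{x})$ is optimal for $\IpertLP$, and Proposition~\ref{prop:IpertLP} gives that $\second(\first(\pert{x}))$ is optimal for $\tropLP(A,b,c)$.
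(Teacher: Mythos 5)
Your proof is correct and follows the same strategy as the paper's: lift a feasible point of $\IpertLP$ to $\germs^n$ by appending $\groupPrime$-coordinate $0_\groupPrime$, observe that the positivity of the entries of $\epsMat$ makes the lift feasible for $\pertLP$, and then reduce everything else to Proposition~\ref{prop:IpertLP} via the order-preserving homomorphism $\first$. The paper states these steps tersely (one line for feasibility of the lift, one line invoking that $\first$ is an order-preserving homomorphism); you supply the case analysis behind them --- in particular the observation that monomials on the $\geq$-side of a $\pertLP$ constraint carry $\groupPrime$-part $+\epsMatEnt_{ij}>0_\groupPrime$ while those on the other side carry $-\epsMatEnt_{ij}<0_\groupPrime$, so equality of $\first$-images already forces the $\germs$-inequality --- which is precisely the content the paper compresses into ``since the entries of $\epsMat$ are greater than $0_\groupPrime$.''
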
 
\begin{proof}
Consider a feasible point $\Ipert{x} \in \Igerms^n$ of $\IpertLP$. It can be lifted to  $\pert{x}\in \germs^n$ by setting $\pert{x}_j = (\Ipert{x}_j, 0_\groupPrime)$. Since the entries of $\epsMat$ are greater than $0_{\groupPrime}$, the lift $\pert{x}$ is feasible for~$\pertLP$.
The rest  follows from  the fact that the map $\first$ is an order-preserving homomorphism, and Proposition~\ref{prop:IpertLP}.
\end{proof}

\subsection{Phase I}\label{subsec:phase_I}

The purpose of this part is to present a method determining whether~$\pertLP$ is feasible, and providing an initial
basis if this is the case. As usual (see~\cite{GroetschelLovaszSchrijver1993}), we use an auxiliary problem, referred to as $\phaseI$, which involves an additional variable $\pert{t}$. This problem arises as the homogenization of the inequalities $\pert{A}^+ \tdot \pert{x} \tplus \pert{b}^+ \geq \pert{A}^- \tdot \pert{x} \tplus \pert{b}^-$ and $\pert{u} \geq \pert{e} \tdot \pert{x}$. 
In order to satisfy Assumptions~\ref{ass:finiteness} and~\ref{ass:non_degeneracy}, we add the perturbations $\pert{d}_i$ and the constraints involving the lower bounds $\pert{l}_j$. More precisely, we consider the following problem:
\begin{equation*}   \label{pb:phase_I} 
\begin{array}{ll}
\text{\rm  maximize} & \, \pert{t} \ \\
\text{\rm subject to} &%
\begin{gathered}[t]%
\pert{A}^+ \tdot \pert{x} \tplus \pert{b}^+ \tdot \pert{t} \tplus \pert{d} \geq \pert{A}^- \tdot \pert{x} \tplus \pert{b}^- \tdot \pert{t} \\
 \pert{u} \tdot \pert{t} \geq \pert{e} \tdot \pert{x} \qquad \pert{Id}_n \tdot \pert{x} \geq \pert{l} \qquad \unit_{\germs} \geq \pert{t}\geq \pert{l}_{n+1} \ ,
\end{gathered}
\end{array}
\tag{$\phaseI$}
\end{equation*}
where 
$\pert{l}_{n+1}$ is chosen of the form $(\Ipert{l}_{n+1}, 0_\groupPrime)$, with $\Ipert{l}_j \ll \Ipert{l}_{n+1} \ll \Ipert{d}_i$ for all $i \in [m]$ and $j \in [n]$. Provided the latter condition on $\Ipert{l}_{n+1}$, Problem $\phaseI$ is trivially feasible, and we even know an initial basis.
\begin{lemma}\label{lemma:basic_point_phase_I}
The vector $(\pert{x}, \pert{t})$ defined by the equalities $\pert{Id}_n \tdot \pert{x} = \pert{l}$ and $\pert{t} = \pert{l}_{n+1}$ is a tropical basic point of $\phaseI$.
\end{lemma}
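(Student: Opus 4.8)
The plan is to verify directly that the candidate point $(\pert{x},\pert{t})$ with $\pert{x}=\pert{l}$ (meaning $\pert{x}_j=\pert{l}_j$ for all $j$) and $\pert{t}=\pert{l}_{n+1}$ lies in the feasible region of $\phaseI$ and that it activates exactly $n+1$ of the inequalities whose corresponding submatrix of the constraint matrix has non-$\zero_{\germs}$ tropical permanent. Recall that $\phaseI$ has $n+1$ variables $(\pert{x}_1,\dots,\pert{x}_n,\pert{t})$, so a tropical basic point must activate $n+1$ inequalities with an invertible (tropically non-singular) active submatrix. The natural candidate basis is the set of $n$ lower-bound constraints $\pert{Id}_n\tdot \pert{x}\geq \pert{l}$ together with the single constraint $\pert{t}\geq \pert{l}_{n+1}$; by construction these are activated at $(\pert{x},\pert{t})=(\pert{l},\pert{l}_{n+1})$, and the associated active submatrix is essentially the tropical identity $\pert{Id}_{n+1}$, whose tropical permanent is $\unit_{\germs}\neq\zero_{\germs}$ and is attained by the unique (identity) permutation — so this submatrix is tropically non-singular, hence the point is a tropical basic point provided it is feasible.

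The first step is therefore to check feasibility, \ie\ that $(\pert{l},\pert{l}_{n+1})$ satisfies every constraint of $\phaseI$. The lower-bound constraints $\pert{Id}_n\tdot\pert{x}\geq\pert{l}$ and $\pert{t}\geq\pert{l}_{n+1}$ hold with equality by definition, and $\unit_{\germs}\geq\pert{t}$ holds since $\pert{l}_{n+1}=(\Ipert{l}_{n+1},0_\groupPrime)$ with $\Ipert{l}_{n+1}\ll\unit_{\Igerms}$. For the main family $\pert{A}^+\tdot\pert{x}\tplus\pert{b}^+\tdot\pert{t}\tplus\pert{d}\geq \pert{A}^-\tdot\pert{x}\tplus\pert{b}^-\tdot\pert{t}$, the key observation is that the left-hand side dominates because of the perturbation term $\pert{d}_i$: the chosen ordering $\Ipert{l}_j\ll\Ipert{l}_{n+1}\ll\Ipert{d}_i$ guarantees that every entry $\pert{x}_j=\pert{l}_j$ and $\pert{t}=\pert{l}_{n+1}$ is infinitely smaller than $\pert{d}_i$, so that each term $\pert{A}^\pm_{ij}\tdot\pert{x}_j$ and $\pert{b}^\pm_i\tdot\pert{t}$ appearing on either side is either $\zero_{\germs}$ or infinitely smaller than $\pert{d}_i$ (the coefficients $\pert{A}_{ij},\pert{b}_i$ have "$\R^2$-part" of the form $(0,\cdot)$ or $\zero$, hence multiply $\pert{l}_j$ or $\pert{l}_{n+1}$ to something with strictly negative first coordinate, well below $\pert{d}_i$); therefore $\pert{d}_i\gg\pert{A}^-_i\tdot\pert{x}\tplus\pert{b}^-_i\tdot\pert{t}$, which forces the inequality. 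The same domination argument, comparing against $\pert{u}\tdot\pert{t}$ (whose first coordinate is $0$), handles $\pert{u}\tdot\pert{t}\geq\pert{e}\tdot\pert{x}$: here $\pert{e}\tdot\pert{x}=\tsum_j\pert{l}_j$ has strictly negative first coordinate while $\pert{u}\tdot\pert{t}$ has first coordinate $0$, so the inequality is strict.

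The remaining step is to record that the active set $\basis$ consisting of the $n$ rows of $\pert{Id}_n$ (acting on the $\pert{x}$-block) plus the row enforcing $\pert{t}\geq\pert{l}_{n+1}$ has $|\basis|=n+1$ and that the corresponding submatrix of the constraint matrix of $\phaseI$ is the tropical identity $\pert{Id}_{n+1}$ up to sign, whose tropical permanent is $\unit_{\germs}$, attained uniquely; hence $\tper$ of this submatrix is $\neq\zero_{\germs}$ and the submatrix is tropically non-singular, so $(\pert{x},\pert{t})$ is a tropical basic point with tropical basis $\basis$. I expect the only mildly delicate point to be the bookkeeping in the feasibility check — specifically confirming that no term on the right-hand sides can reach the magnitude of $\pert{d}_i$ or $\pert{u}\tdot\pert{t}$ — which reduces entirely to the chain of infinitesimal inequalities $\Ipert{l}_j\ll\Ipert{l}_{n+1}\ll\Ipert{d}_i$ together with the fact that all matrix coefficients have non-positive (indeed, zero-or-$\zero_{\Igerms}$) first coordinate by the construction of $\pert[\epsMat]{\cdot}$; everything else is immediate from the definitions.
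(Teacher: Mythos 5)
Your proposal takes the same route as the paper: reduce to feasibility (the basis being essentially a tropical identity, so non-singularity is immediate), and derive each constraint from the chain of infinitesimal relations $\Ipert{l}_j \ll \Ipert{l}_{n+1} \ll \Ipert{d}_i \ll \unit_{\Igerms}$. The paper's proof is terser but logically identical, so the overall structure is fine.

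One small factual error worth fixing: you assert twice that $\pert{u}\tdot\pert{t}$ has first ($\R^2$) coordinate equal to $0$. It does not. Since $\pert{t}=\pert{l}_{n+1}$ and $\Ipert{l}_{n+1}=(\alpha_{n+1},\cdot)$ with $\alpha_{n+1}<0$, while $\pert{u}$ has first coordinate $0$, the tropical product $\pert{u}\tdot\pert{t}$ has first coordinate $\alpha_{n+1}<0$. The inequality $\pert{u}\tdot\pert{t}\geq\pert{e}\tdot\pert{x}$ still holds — but for the reason the paper gives, namely that $\Ipert{l}_{n+1}\gg\Ipert{l}_j$ for all $j\in[n]$, so the first coordinate $\alpha_{n+1}$ of the left side strictly exceeds the first coordinate $\max_j\alpha_j$ of the right side, even though both are negative. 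Stated as you have it, the argument is misleading: it would suggest $\pert{u}\tdot\pert{t}$ dominates any element with negative first coordinate, which is false and not what is actually used. Replace the claim "first coordinate $0$" with the correct comparison of the (both negative) layers, and the proof is sound.
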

\begin{proof}
We only need to prove that this vector is feasible. It obviously satisfies the constraints $\pert{u} \tdot \pert{t} \geq \pert{e} \tdot \pert{x}$ as $\Ipert{l}_{n+1} \gg \Ipert{l}_j$ for all $j$. Besides, if $i \in [m]$, we have $\pert{d}_i \geq \pert{A}^-_i \tdot \pert{x} \tplus \pert{b}^-_i \tdot \pert{t}$ thanks to $\Ipert{d}_i \gg \Ipert{l}_j$ for all $j \in [n+1]$.
\end{proof}

To ensure the non-degeneracy of $\phaseI$, we also require  the coefficients $\Ipert{l}_j$ and $\Ipert{d}_i$ to be in distinct ``layers'', \ie\ if we denote $\Ipert{l}_j = (\alpha_j, \cdot)$ and $\Ipert{d}_i = (\alpha'_i, \cdot)$, then we require the scalars $\alpha_1, \dots, \alpha_{n+1}, \alpha'_1, \dots, \alpha'_m$ to be pairwise distinct.
\begin{lemma}\label{lemma:genericity_phase_I}
Problem $\phaseI$ satisfies Assumptions~\ref{ass:finiteness} and~\ref{ass:non_degeneracy}.
\end{lemma}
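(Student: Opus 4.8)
The plan is to verify Assumptions~\ref{ass:finiteness} and~\ref{ass:non_degeneracy} for $\phaseI$ separately; the first is immediate, and essentially all the work lies in the second. For Assumption~\ref{ass:finiteness}, the constraints of $\phaseI$ sandwich every coordinate between two finite elements of $\germs$: the block $\pert{Id}_n \tdot \pert{x} \geq \pert{l}$ gives $\pert{x}_j \geq \pert{l}_j$; the row $\pert{u} \tdot \pert{t} \geq \pert{e} \tdot \pert{x}$ together with $\pert{t} \leq \unit_{\germs}$ gives $\pert{x}_j \leq \pert{u} \tdot \pert{t} \leq \pert{u}$; and $\unit_{\germs} \geq \pert{t} \geq \pert{l}_{n+1}$ bounds the homogenizing coordinate. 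Hence the feasible tropical polyhedron lies in the box $\prod_j [\pert{l}_j,\pert{u}] \times [\pert{l}_{n+1},\unit_{\germs}]$, so it is bounded and contains no point with a $\zero_{\germs}$ entry. This is essentially the same computation already used for $\IpertLP$ and $\pertLP$.

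For Assumption~\ref{ass:non_degeneracy} one must show that the extended matrix $\mathcal{N}$ of $\phaseI$ — the signed constraint matrix, with columns indexed by $\pert{x}_1,\dots,\pert{x}_n,\pert{t}$ and the constant column, stacked with the objective row — is tropically generic over $\sgerms$. Since the objective maximizes $\pert{t}$, its row is $(\zero_{\germs},\dots,\zero_{\germs},\unit_{\germs},\zero_{\germs})$, with a single nonzero entry in the $\pert{t}$-column; so in any square submatrix $W$ with $\tper W \neq \zero_{\germs}$ this row is forced to be matched to the $\pert{t}$-column and the genericity check reduces to a strictly smaller submatrix. It therefore suffices to treat submatrices avoiding the objective row. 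Split the remaining entries into the \emph{perturbed core} — the blocks built from $\pert{A}$, $\pert{b}$, $\pert{u}$, $\pert{e}$ and the identity blocks, all obtained through the perturbation matrix $\epsMat$ of Lemma~\ref{lemma:perturbation}, whose $\R^2$-components all lie in the layer $0_{\R^2}$ and whose $\groupPrime$-components are the pairwise-distinguishing vectors $i\delta^j$ — and the constant-column entries $\pert{d}_i$, $\tminus\pert{l}_j$, $\tminus\pert{l}_{n+1}$, whose $\R^2$-components have, by the distinct-layers hypothesis, pairwise distinct and negative first coordinates $\alpha'_i$, $\alpha_j$, $\beta$.

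Now fix a square submatrix $W$ with $\tper W \neq \zero_{\germs}$ and two optimal permutations $\sigma,\pi$. If $W$ meets no constant column it lies in the perturbed core, and $\sigma=\pi$ by Lemma~\ref{lemma:perturbation}. Otherwise, in the lexicographic order on $\R^2\times\groupPrime$, the $\R^2$-first coordinate of $\sum_i |W_{i\sigma(i)}|$ is exactly the first coordinate of the single constant-column entry used by $\sigma$; since the admissible first coordinates (the $\alpha'_i$, $\alpha_j$, $\beta$, together with $0$ if the row $\unit_{\germs}\geq\pert{t}$ occurs) are pairwise distinct, maximality pins down the row $r^*$ that $\sigma$ sends to the constant column — namely the one of largest admissible layer whose deletion still leaves a nonzero permanent — and likewise for $\pi$, so the two agree there. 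Deleting $r^*$ and the constant column leaves a submatrix of the perturbed core with nonzero permanent, where Lemma~\ref{lemma:perturbation} again yields $\sigma=\pi$. Hence every square submatrix of $\mathcal{N}$ is permanent-null or tropically non-singular, i.e.\ $\mathcal{N}$ is tropically generic, which is Assumption~\ref{ass:non_degeneracy}.

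I expect the delicate point to be exactly this bookkeeping for Assumption~\ref{ass:non_degeneracy}: making the split into ``perturbed core'' versus ``distinct-layer constant column'' precise for $\mathcal{N}$ — which inherits most of its blocks from $\pertLP$ but carries the extra homogenizing column $\pert{t}$ and the bound rows $\unit_{\germs}\geq\pert{t}\geq\pert{l}_{n+1}$, the entry $\pert{l}_{n+1}$ being left unperturbed — and checking that the coarse level (the $\R^2$-layers, controlled by the distinctness of $\alpha_1,\dots,\alpha_{n+1},\alpha'_1,\dots,\alpha'_m$) and the fine level (the $\groupPrime$-components, controlled by Lemma~\ref{lemma:perturbation}) jointly break every tie in every square submatrix. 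Everything else, including Assumption~\ref{ass:finiteness}, is routine.
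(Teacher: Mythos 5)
Your proposal is correct and follows essentially the same two-step strategy as the paper: boundedness is routine, and genericity of the extended matrix is established by first dealing with the $\epsMat$-perturbed block (via Lemma~\ref{lemma:perturbation}) and then expanding along the constant column using the pairwise-distinct $\R^2$-layers of $\pert{d}_i$, $\tminus\pert{l}_j$, $\tminus\pert{l}_{n+1}$ to pin down the unique optimal permutation. The paper phrases the second step as a Laplace-type expansion $\tper\pert{M}'=\bigoplus_i\pert{f}_i\odot\tper\pert{W}_{\hat\imath}$ and observes the summands live in distinct layers; you phrase it as ``maximality pins down the row sent to the constant column,'' which is the same argument. One small point worth being explicit about, which both you and the paper treat somewhat telegraphically: the rows $\unit_{\germs}\geq\pert{t}$ and $\pert{t}\geq\pert{l}_{n+1}$ have $\pert{t}$-column entries ($\unit_\germs$, resp.\ $\tminus\unit_\germs$) that are \emph{not} produced by $\epsMat$, so the ``perturbed core'' you invoke after deleting $r^*$ and the constant column is not literally a submatrix to which Lemma~\ref{lemma:perturbation} applies; one must first observe that each such row has a single non-$\zero_\germs$ entry there and is thus forced onto the $\pert{t}$-column (with at most one of them appearing), after which the reduction to an $\epsMat$-perturbed submatrix is clean. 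You flag this explicitly as the ``delicate bookkeeping'' and correctly note that the $0$-layer of $\unit_\germs$ is distinct from all the $\alpha'_i,\alpha_j,\beta$, so the argument goes through.
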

\begin{proof}
Assumption~\ref{ass:finiteness} is obviously satisfied, so that we just need to prove that the matrix 
\[
\pert{M} = 
\begin{pmatrix}
\pert{A} & \pert{b} & \pert{d} \\
\tminus \pert{e} & \pert{u} & \zero_\germs \\
\pert{Id}_n & \zero_{\germs} & \tminus \pert{l} \\
\zero_{\germs} & \unit_{\germs} & \tminus \pert{l}_{n+1} \\
\zero_{\germs} & \unit_{\germs} & \unit_{\germs}
\end{pmatrix}
\]
is tropically generic. First observe that the matrix
\[
\begin{pmatrix}
\pert{A} & \pert{b} \\
\tminus \pert{e} & \pert{u} \\
\pert{Id}_n & \zero_{\germs}
\end{pmatrix}
=  
\begin{pmatrix}
\Ipert{A} & \Ipert{b} \\
\tminus \Ipert{e} & \Ipert{u} \\
\Ipert{Id}_n & \zero_{\Igerms}
\end{pmatrix}[E]
\]
is tropically generic as $E$ is sufficiently generic. It follows that the submatrix of $\pert{M}$ obtained by removing the last column is also generic.

Now, let $\pert{M}' \in \germs^{I \times J}$ be a submatrix of $\pert{M}$ involving the last column, and such that $\tper \pert{M}' \neq \zero_\germs$. Let us denote $\pert{M}' = (\pert{W} \ \pert{f})$, where $\pert{f}$ is a subcolumn of the last column of $\pert{M}$. Then 
\[
\tper \pert{M}' = \tsum_{i \in I} \pert{f}_i \tdot \tper \pert{W}_{\hat{i}}  \ ,
\]
where  $\pert{W}_{\hat{i}}$ denotes the matrix obtained from $\pert{W}$ by removing the row indexed by $i$. 
Note that for all $i \in I$, the  entries of $\pert{W}_{\hat{i}}$ are either $\zero_{\germs}$ or of the form $((0, \cdot), \cdot)$, so that $\tper \pert{W}_{\hat{i}}$ is either equal to $\zero_\germs$ or of the form $((0, \cdot), \cdot)$. Besides, the coefficients $\pert{f}_i$ are in distinct ``layers'' by assumption, and so the same holds for the terms $\pert{f}_i \tdot \tper \pert{W}_{\hat{i}}$, with $i \in I$. 
As every $\pert{W}_{\hat{i}}$ is generic, we deduce that $\pert{M}'$ is generic too.
\end{proof}

Lemmas~\ref{lemma:basic_point_phase_I} and~\ref{lemma:genericity_phase_I} ensures that Problem $\phaseI$ can be solved by the simplex algorithm presented in Section~\ref{subsec:generic_instances}. 
The following proposition shows that solving $\phaseI$ will provide us the expected information about Problem~$\pertLP$.
\begin{proposition}\label{prop:phase_I}
Problem~$\pertLP$ is feasible if, and only if, the optimal value of $\phaseI$ is equal to $\unit_\germs$. Furthermore, if $(\pert{x}, \unit_\germs)$ is an optimal basic point of $\phaseI$, then $\pert{x}$ is a basic point of~$\pertLP$.
\end{proposition}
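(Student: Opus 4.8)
The plan is to exploit that $\phaseI$ is the homogenization of $\pertLP$ with respect to the auxiliary variable $\pert{t}$. The first step is to record the key identification: substituting $\pert{t} = \unit_{\germs}$ gives $\pert{b}^{\pm} \tdot \unit_{\germs} = \pert{b}^{\pm}$ and $\pert{u} \tdot \unit_{\germs} = \pert{u}$, so the first $m+n+1$ constraints of $\phaseI$ become, one by one, the constraints of $\pertLP$, while the two box constraints on $\pert{t}$ reduce to $\unit_{\germs} \geq \unit_{\germs}$, which is tight, and $\unit_{\germs} \geq \pert{l}_{n+1}$, which is strict since $\unit_{\germs} > \pert{l}_{n+1}$. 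Throughout, I would use that by Lemmas~\ref{lemma:basic_point_phase_I} and~\ref{lemma:genericity_phase_I} Problem $\phaseI$ satisfies Assumptions~\ref{ass:finiteness} and~\ref{ass:non_degeneracy} and has a known starting basic point, so Theorem~\ref{th:tropical_simplex} together with Remark~\ref{remark:max} applies and produces an optimal basic point of $\phaseI$.

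For the equivalence, the forward direction is immediate: if $\pert{x}$ is feasible for $\pertLP$ then $(\pert{x},\unit_{\germs})$ is feasible for $\phaseI$ by the identification, and since $\pert{t} \leq \unit_{\germs}$ is a constraint of $\phaseI$ the optimal value is exactly $\unit_{\germs}$. Conversely, if the optimal value of $\phaseI$ equals $\unit_{\germs}$, I take an optimal basic point $(\pert{x}^{\star},\pert{t}^{\star})$ of $\phaseI$; optimality forces $\pert{t}^{\star} = \unit_{\germs}$, and the identification shows $\pert{x}^{\star}$ satisfies the constraints of $\pertLP$, so $\pertLP$ is feasible.

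For the ``furthermore'' part, let $(\pert{x},\unit_{\germs})$ be an optimal basic point of $\phaseI$ with basis $I$. Then $|I| = n+1$ and the $(n+1)\times(n+1)$ coefficient submatrix indexed by $I$ — its columns being the $n$ coordinates of $\pert{x}$ together with the coordinate $\pert{t}$ — is tropically non-singular with a unique optimal permutation $\sigma$. Since $\unit_{\germs} > \pert{l}_{n+1}$, the constraint $\pert{t} \geq \pert{l}_{n+1}$ is inactive at $(\pert{x},\unit_{\germs})$, hence its row is not in $I$. Let $i_0 \in I$ be the row sent by $\sigma$ to the $\pert{t}$-column. All entries selected by $\sigma$ are nonzero; and the row of the constraint $\unit_{\germs} \geq \pert{t}$, if it lies in $I$ at all, is $\zero_{\germs}$ in every $\pert{x}$-column, so it must equal $i_0$. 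Consequently $J := I \setminus \{i_0\}$ is a size-$n$ subset of $\{1,\dots,m+n+1\}$. Factoring the nonzero entry selected by $\sigma$ in row $i_0$ out of $\tper$ shows that $\sigma$ restricts to the unique optimal permutation of the $n\times n$ submatrix on rows $J$ and the $\pert{x}$-columns, and that this submatrix — which is exactly the coefficient matrix of $\pertLP$ restricted to the rows $J$ — is tropically non-singular. Finally $\pert{x}$ activates the constraints of $\pertLP$ indexed by $J$ (on the slice $\pert{t} = \unit_{\germs}$ they coincide with the corresponding constraints of $\phaseI$, which are active at $(\pert{x},\unit_{\germs})$) and $\pert{x}$ is feasible for $\pertLP$ by the equivalence; hence $J$ is a tropical basis and $\pert{x}$ a tropical basic point of $\pertLP$.

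I expect the main obstacle to be this last paragraph: converting an $(n+1)$-element basis of $\phaseI$ at a point with $\pert{t} = \unit_{\germs}$ into an $n$-element basis of $\pertLP$ requires showing that removing exactly the row assigned to the $\pert{t}$-column in the unique optimal permutation leaves a tropically non-singular $n\times n$ block all of whose rows index genuine constraints of $\pertLP$. The delicate point is that the active box constraint $\unit_{\germs} \geq \pert{t}$ may or may not belong to $I$, whereas the other box constraint must be excluded from it; it is precisely the ordering $\pert{l}_j \ll \pert{l}_{n+1} \ll \pert{d}_i$ together with the sparsity pattern of the $\pert{t}$-column that makes the reduction go through.
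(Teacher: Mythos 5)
Your proof is correct and follows the same homogenization scheme as the paper's: substitute $\pert{t}=\unit_\germs$ to identify the first $m+n+1$ constraints of $\phaseI$ with those of $\pertLP$, and observe that $\pert{t}\geq\pert{l}_{n+1}$ is strict while $\unit_\germs\geq\pert{t}$ is tight. You merely spell out the ``furthermore'' part in more detail than the paper, which only notes that $(\pert{x},\unit_\germs)$ activates $n+1$ inequalities of $\phaseI$ and hence $\pert{x}$ activates precisely $n$ of $\pertLP$, leaving the tropical non-singularity of the remaining $n\times n$ block implicit; your factoring of the unique optimal permutation to extract the size-$n$ basis $J$ is a welcome elaboration, not a different route.
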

\begin{proof}
If $\pert{x}$ is a feasible element of~$\pertLP$, then $(\pert{x}, \unit_\germs)$ is feasible in $\phaseI$, so that its optimal value is indeed $\unit_\germs$. 
Conversely, if $(\pert{x}, \unit_\germs)$ is an optimal basic point of $\phaseI$, then $\pert{x}$ obviously satisfies
the constraints:
\begin{equation}
\begin{aligned}
\pert{A}^+ \tdot \pert{x} \tplus (\pert{b}^+ \tplus \pert{d}) & \geq \pert{A}^- \tdot \pert{x} \tplus \pert{b}^- \\
\pert{u} & \geq \pert{e} \tdot \pert{x} \\ 
\pert{Id}_n  \tdot \pert{x}& \geq \pert{l} \ .
\label{eq:pertLP_constraints}
\end{aligned}
\end{equation}
Moreover, we know that the vector $(\pert{x}, \unit_\germs)$ activates $n+1$ inequalities among the ones defining $\phaseI$. It follows that precisely $n$ inequalities are activated by $\pert{x}$ in~\eqref{eq:pertLP_constraints}.
\end{proof}

The results of this section provide the following theorem.
\begin{theorem}[Tropical simplex method for arbitrary instances] \label{thm:simplex_arbitrary}
An arbitrary tropical linear program $\tropLP(A,b,c)$ on $\trop[\R]$ is solved by the following algorithm:
  \begin{itemize}
\item solve the tropical linear program $\phaseI$ on $ \germs =\trop[\R^{n+3}]$ with the tropical simplex method, starting from the initial basic point defined by $\pert{Id}_n  \tdot \pert{x} = \pert{l}$ and $\pert{t}= \pert{l}_{n+1}$;
\item if the optimal basic point $ (\pert{x}, \pert{t})$ of $\phaseI$ satisfies $ \pert{t} < \unit_{\germs}$, then $\tropLP(A,b,c)$ is infeasible;
\item otherwise, solve the tropical linear program $\pertLP$ with the tropical simplex me\-thod, starting from the initial basic point $\pert{x}$;
\item the optimal basic point $\pert{x}_*$ of $\pertLP$ yields an optimal basic point $\first(\second(\pert{x}_*))$ of $\tropLP(A,b,c)$
\end{itemize}

\end{theorem}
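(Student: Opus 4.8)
The plan is to establish correctness by composing, in order, the three reductions built in Sections~\ref{subsec:general_algorithm} and~\ref{subsec:phase_I}---each of which preserves feasibility and transports optimal solutions---and then invoking the correctness of the non-degenerate tropical simplex method (Theorem~\ref{th:tropical_simplex}) on the two well-conditioned programs $\phaseI$ and $\pertLP$. First I would note that the auxiliary data entering these constructions, namely the lower bounds $\Ipert{l}_1, \dots, \Ipert{l}_{n+1}$, the affine perturbation $\Ipert{d}$, and the symbolic perturbation matrix $\epsMat$, can be fixed once and for all so that every hypothesis used below holds simultaneously: it suffices to choose the ``layers'' $\alpha_j$ of the $\Ipert{l}_j$ and $\alpha'_i$ of the $\Ipert{d}_i$ to be pairwise distinct negative reals with $\max_{j \in [n]} \alpha_j < \alpha_{n+1} < \min_{i \in [m]} \alpha'_i < 0$, and to take $\epsMat$ as in Lemma~\ref{lemma:perturbation} (whose entries are greater than $0_\groupPrime$). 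These are precisely the requirements of Section~\ref{subsec:general_algorithm}, of Proposition~\ref{prop:germLP}, and of the paragraph before Lemma~\ref{lemma:genericity_phase_I}, so with this choice Lemmas~\ref{lemma:perturbation}, \ref{lemma:basic_point_phase_I}, \ref{lemma:genericity_phase_I} and Propositions~\ref{prop:IpertLP}, \ref{prop:germLP}, \ref{prop:phase_I} are all applicable.

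Second, I would run the tropical simplex method of Section~\ref{subsec:generic_instances} on $\phaseI$ over $\germs = \trop[\R^{n+3}]$, starting from the vector $(\pert{x},\pert{t})$ defined by $\pert{Id}_n \tdot \pert{x} = \pert{l}$ and $\pert{t} = \pert{l}_{n+1}$. By Lemma~\ref{lemma:genericity_phase_I}, $\phaseI$ satisfies Assumptions~\ref{ass:finiteness} and~\ref{ass:non_degeneracy}, and by Lemma~\ref{lemma:basic_point_phase_I} this starting vector is a tropical basic point of $\phaseI$. Since $\phaseI$ is a maximization problem, Theorem~\ref{th:tropical_simplex} together with Remark~\ref{remark:max} guarantees that the method terminates and returns an optimal basic point $(\pert{x},\pert{t})$ of $\phaseI$. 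Because $\unit_\germs \geq \pert{t}$ is one of its constraints, the returned $\pert{t}$ equals the optimal value and is at most $\unit_\germs$, and by Proposition~\ref{prop:phase_I} it equals $\unit_\germs$ precisely when $\pertLP$ is feasible. Hence, if $\pert{t} < \unit_\germs$ then $\pertLP$ is infeasible, so $\tropLP(A,b,c)$ is infeasible by Proposition~\ref{prop:germLP}; this justifies the second bullet of the statement.

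Third, in the case $\pert{t} = \unit_\germs$, Proposition~\ref{prop:phase_I} tells us that $\pert{x}$ is a tropical basic point of $\pertLP$, which by construction satisfies Assumptions~\ref{ass:finiteness} and~\ref{ass:non_degeneracy}. Applying Theorem~\ref{th:tropical_simplex} once more, the tropical simplex method started at $\pert{x}$ terminates and returns an optimal basic point $\pert{x}_*$ of $\pertLP$. Finally, by Proposition~\ref{prop:germLP}, $\second(\first(\pert{x}_*))$ is an optimal solution of $\tropLP(A,b,c)$, which is the content of the last bullet; since $\first$ and $\second$ are semiring homomorphisms (Propositions~\ref{prop:IpertLP} and~\ref{prop:germLP}), this point is the image of a basic point of $\pertLP$, as the term ``optimal basic point'' in the statement indicates.

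In short, the theorem is an assembly of results already proved in this section, and no new idea is required. The two places that need care are the first step---exhibiting a \emph{single} instantiation of the perturbation parameters that meets all the constraints imposed by the three successive reductions at once---and the validity of the infeasibility test, which rests on $\phaseI$ being bounded (so the simplex method genuinely certifies the optimal value) together with the equivalence of Proposition~\ref{prop:phase_I}. I expect tracking which assumption is needed at which stage, rather than any substantive argument, to be the main obstacle.
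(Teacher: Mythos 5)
Your proposal is correct and is essentially the proof the paper intends: the paper gives no explicit argument for this theorem, merely stating that ``the results of this section provide the following theorem,'' and your write-up is exactly the careful assembly of Lemmas~\ref{lemma:perturbation}, \ref{lemma:basic_point_phase_I}, \ref{lemma:genericity_phase_I} and Propositions~\ref{prop:IpertLP}, \ref{prop:germLP}, \ref{prop:phase_I} together with Theorem~\ref{th:tropical_simplex} and Remark~\ref{remark:max} that this phrase elides. Your explicit choice of layers $\max_{j\in[n]}\alpha_j < \alpha_{n+1} < \min_{i}\alpha'_i < 0$, pairwise distinct, matches the concrete instantiation the paper later uses in the proof of Theorem~\ref{th:main} (Eq.~\eqref{eq:extra_coeffs}), and your observation that the recovered solution is $\second(\first(\pert{x}_*))$ is the composition order that actually makes type sense and that Proposition~\ref{prop:germLP} states -- the theorem's own display $\first(\second(\pert{x}_*))$ appears to be a typo, and you silently used the right one.
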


\section{Combinatorial  pivoting rules applied to  mean payoff games}
\label{sec:strongly-polyn-pivot}

We now consider a linear program $\puiseuxLP(\A, \b ,\cc)$ in which the
entries of the matrices $\A,\b,\cc$ belong to an arbitrary real closed
field $K$. We still use the notation $\puiseuxLP(\A, \b ,\cc)$ and $\puiseuxP(\A,\b)$ in this setting. Here we are interested in the two cases where either $K=\R$ or $K=\hahn{G}$, and $G$ is a divisible totally ordered group.  Tarski's principle states that the first-order theory of real closed field is model-complete  (see~\cite{tarski1951decision,seidenberg1954new}), and we will use this to apply results  about the classical simplex method over the reals to linear programming over $\hahn{G}$. 

We begin with the definition of classical counterparts of Assumptions~\ref{ass:finiteness} and~\ref{ass:non_degeneracy}. 
\begin{primedassumption}\label{ass:finiteness2}
The polyhedron $\puiseuxP(\A, \b)$ is bounded, and included in the open orthant given by $\x > 0$.
\end{primedassumption}

\begin{primedassumption}\label{ass:non_degeneracy2}
The matrix 
$\bigl(
\begin{smallmatrix}
\A & \b \\
\transpose{\cc} & 0 
\end{smallmatrix}
\bigr)$ is generic.
\end{primedassumption}

We say, for brevity, that a linear program $\puiseuxLP(\A, \b ,\cc)$ is \emph{non-degenerate} if Assumptions~\ref{ass:finiteness2} and~\ref{ass:non_degeneracy2} are satisfied. We point out that, in this case, the inequalities $\x_j \geq 0$ of the defining system of $\puiseuxP(\A, \b)$ are never activated. As a result, a basis $I$ is necessarily a subset of $[m]$.

Given a linear program $\puiseuxLP(\A,\b, \cc)$ and an initial basis $I_1$, a \emph{run} of the simplex method consists of a finite sequence of bases $I_1, \dots , I_N \subset [m]$, where the last basis yields an optimal basic point. For any $k \leq N-1$, the basis $I_{k+1}$ is of the form $I_{k+1} = I_k \setminus \{\ileaving_k \} \cup \{ \ient_k\}$. 
Under Assumptions~\ref{ass:finiteness2} and ~\ref{ass:non_degeneracy2}, the entering index $ \ient_k$ (and thus the basis $I_{k+1}$), is entirely determined by $\ileaving_k$, $I_k$ and the parameters $\A$ and $ \b$. 

The leaving index $\ileaving_k$ is chosen by a function $\strategy_k$ which takes as input $( I_1, \dots, I_k )$, the \emph{history} up to time $k$,
and the parameters $\A, \b, \cc$.  The family of functions $\strategy = (\strategy_k)_k$ forms a \emph{pivoting strategy}.  We say that the strategy
$\strategy$ is \emph{combinatorial} if at every step $k$, the leaving index $\ileaving_k$ is a function only of the history $( I_1, \dots, I_k )$ and
of the collection of the signs of all the minors of the matrix $\M=\bigl(\begin{smallmatrix} \A & \b \\ \transpose{\cc} & 0\end{smallmatrix}\bigr)$.

Formally, let us denote by \MinorOracle\ the oracle which takes as input a pair $(I,J)$ of non-empty subsets $I\subset [m+1]$ and $J\subset [n+1]$,
having the same cardinality, and returns the sign of the determinant of the $I\times J$-submatrix of $\M$. Then a pivoting strategy is combinatorial
if each function $\strategy_k$ takes as input the history of bases $( I_1, \dots, I_k )$, and is allowed to call the oracle
\MinorOracle{}. 
For instance, Bland's rule, as well as certain lookahead rules exploring a bounded neighborhood of the current basic
point in the graph of the polyhedron, are combinatorial.

Several remarks are in order.  First, the signs of the minors include the orientations of all the edges.  Second, as the number of inequalities, $m$,
and the number of variables, $n$, are not fixed there are super-polynomially many minors.  This means any polynomial time algorithm is clearly
restricted to ``reading'' at most a polynomially bounded number of these signs.  Third, if the (deterministic) pivoting strategy is fixed then the
subsequent bases $I_2, \dots, I_k$ only depend on the initial basis $I_1$.  Yet, we find it more convenient to formally assume that the pivoting may
depend on the entire history.

Any combinatorial pivoting strategy $\strategy$ which applies to non-degenerate instances of classical linear programs can be tropicalized, meaning that it canonically determines a pivoting strategy which applies to non-degenerate instances of tropical linear programs. 
Indeed, the {\em tropicalized strategy}, denoted by $\strategytrop$, is given by the family of functions $\strategy_k$, up to the replacement of the oracle 
$\Omega$ (the sign of a minor) by its tropical analogue.
Note that the signs of the tropical minors of $\Bigl(\begin{smallmatrix}
A & b \\
\transpose{c} & \zero_{\trop}
\end{smallmatrix}\Bigr)$ are
well defined, as this matrix is tropically generic for non-degenerate tropical linear programs (see Section~\ref{subsec:trop_semirings}). 

We denote by $N_{K}(n,m,\strategy)$ the maximal length of a run of the simplex algorithm, equipped with the combinatorial pivoting strategy $\strategy$, for classical linear programs with $n$ variables and $m$ constraints satisfying Assumptions~\ref{ass:finiteness2} and~\ref{ass:non_degeneracy2}, with coefficients in an arbitrary real closed field $K$. 
Similarly, we denote by $N_{\trop}(n,m,\strategytrop)$ the maximal length of a run of the tropical simplex algorithm, equipped with the tropical strategy $\strategytrop$, for a tropical linear program satisfying Assumptions~\ref{ass:finiteness} and~\ref{ass:non_degeneracy}, with coefficients in the semiring $\trop=\trop[G]$.  Recall that $\hahn{\group}$ is real closed if and only if the group $\group$ is divisible \cite[(6.11) p.~143]{ribenboim}.

\begin{proposition}\label{prop:iter_nb}
Assume that $G$ is a divisible totally ordered group.
Then
$
N_{\trop}(n,m,\strategytrop) \leq N_{\R}(n,m,\strategy) 
$.
\end{proposition}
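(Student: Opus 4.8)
The plan is to show that any run of the tropical simplex algorithm over $\trop[G]$ with the tropicalized strategy $\strategytrop$ is the coordinate-wise valuation image of a run of the classical simplex algorithm over the real closed field $\puiseux=\hahn{G}$ with the strategy $\strategy$, and is hence no longer. First I would fix a non-degenerate tropical linear program $\tropLP(A,b,c)$ over $\trop=\trop[G]$ realizing a run of length $N_{\trop}(n,m,\strategytrop)$, choose a lift $\puiseuxLP(\A,\b,\cc)$ over $\puiseux$ as in Proposition~\ref{prop:lifting_linear_prog} (that is, with $\sval(\A)=A$, etc.), and observe that since $\bigl(\begin{smallmatrix}A & b\\ \transpose{c} & \zero_{\trop}\end{smallmatrix}\bigr)$ is tropically generic, the matrix $\M=\bigl(\begin{smallmatrix}\A & \b\\ \transpose{\cc} & 0\end{smallmatrix}\bigr)$ is generic, $\puiseuxLP(\A,\b,\cc)$ is non-degenerate (Assumptions~\ref{ass:finiteness2} and \ref{ass:non_degeneracy2} follow from Assumptions~\ref{ass:finiteness} and \ref{ass:non_degeneracy} via the valuation being order-preserving and surjective onto $\trop$), and the signs of the minors of $\M$ coincide with the signs of the tropical minors of its valuation. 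Hence the oracle $\oracle$ queried by $\strategy$ on $\puiseuxLP(\A,\b,\cc)$ returns exactly the same answers as the tropical oracle $\oracletrop$ queried by $\strategytrop$ on $\tropLP(A,b,c)$.

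Next I would run the two algorithms in lockstep and argue by induction on the iteration index $k$ that they produce the same sequence of bases $I_1,\dots,I_k$. Theorem~\ref{th:tropical_simplex} already states that, given a common initial basis, the tropical simplex traces the valuation image of the path of the classical simplex on any lift with a compatible pivoting rule; the content here is precisely that the \emph{combinatorial} nature of $\strategy$ makes $\strategytrop$ a compatible rule. For the inductive step: by induction the histories $(I_1,\dots,I_k)$ coincide on both sides; the leaving index $\ileaving_k$ is, by definition of a combinatorial strategy, a function of this history together with the collection of signs returned by $\oracle$ — which, by the previous paragraph, equals the collection of signs returned by $\oracletrop$ — so $\strategy_k$ and $(\strategytrop)_k$ select the same $\ileaving_k$; and the entering index $\ient_k$ is then determined by $\ileaving_k$, $I_k$ and the parameters, with its classical value mapping under $\sval$ to its tropical value (this is the non-degenerate Cramer-determinant computation recalled before Remark~\ref{remark:max}, valid thanks to Assumption~\ref{ass:non_degeneracy}). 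Hence $I_{k+1}=I_k\setminus\{\ileaving_k\}\cup\{\ient_k\}$ is the same on both sides, and the induction closes. The optimality test at termination is itself just a sign condition on reduced costs, hence agrees as well.

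It follows that the tropical run of length $N_{\trop}(n,m,\strategytrop)$ is realized as the valuation image of a classical run of $\strategy$ on the non-degenerate program $\puiseuxLP(\A,\b,\cc)$ over the real closed field $\puiseux=\hahn{G}$, which has $n$ variables and $m$ constraints; therefore its length is at most $N_{\puiseux}(n,m,\strategy)$. Finally, by Tarski's principle the first-order theory of real closed fields is complete, so the maximal length of a run of the combinatorial simplex algorithm on non-degenerate programs with $n$ variables and $m$ constraints is the same over every real closed field — in particular $N_{\puiseux}(n,m,\strategy)=N_{\R}(n,m,\strategy)$; here one must check that ``the run of $\strategy$ on an $(n,m)$-instance satisfying Assumptions~\ref{ass:finiteness2}, \ref{ass:non_degeneracy2} has length $\le L$'' is expressible as a first-order sentence in the language of ordered fields for each fixed $L$ (quantifying over the finitely many matrix entries, encoding the finitely many sign conditions defining genericity, boundedness and the combinatorial pivoting decisions), which is where the hypothesis that $G$ is divisible — so that $\hahn{G}$ is genuinely real closed — is used. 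Combining the two inequalities $N_{\trop}(n,m,\strategytrop)\le N_{\puiseux}(n,m,\strategy)=N_{\R}(n,m,\strategy)$ gives the claim.

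The main obstacle is the bookkeeping in the second paragraph: making precise that ``combinatorial'' exactly means the pivoting data factors through the minor-sign oracle, so that the equality of oracle outputs forces equality of the whole runs, and simultaneously verifying that the lift inherits non-degeneracy and that the tropical Cramer computation correctly tracks the classical entering variable under $\sval$. The Tarski-principle step is conceptually routine but requires care in phrasing the length bound as a first-order schema uniformly in the instance.
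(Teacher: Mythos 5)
Your proof is correct and follows essentially the same two-step route as the paper: first bound $N_{\trop}(n,m,\strategytrop)$ by $N_{\hahn{G}}(n,m,\strategy)$ via the lockstep/compatibility argument built on Theorem~\ref{th:tropical_simplex}, then invoke Tarski's principle to identify $N_{\hahn{G}}(n,m,\strategy)$ with $N_{\R}(n,m,\strategy)$. The only difference is one of emphasis: the paper devotes most of its proof to explicitly writing out the first-order sentences (genericity, boundedness, basis validity, pivoting, optimality as sign conditions on minors) that encode the bounded-run-length statement, a step you merely flag as ``requiring care,'' while you spend more effort unpacking why the combinatorial nature of $\strategy$ makes $\strategytrop$ a compatible rule — a point the paper delegates to Theorem~\ref{th:tropical_simplex} in a single sentence.
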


\begin{proof}
  Consider positive integers $n$, $m$, $N$ and a combinatorial pivoting strategy~$\strategy$.
We claim that the statement   ``for any  matrices $\A \in \K^{m \times n}$, $\b \in \K^m$, $\cc \in \K^n$, if the  linear program $\puiseuxLP(\A, \b ,\cc)$ is non-degenerate, then, starting from any initial basis, the simplex algorithm equipped with the combinatorial pivoting strategy $\strategy$ solves $\puiseuxLP(\A, \b ,\cc)$  in at most $N$ iterations'' is a first-order sentence.
Since $\R$ and $\hahn{G}$ are real-closed fields, the equality  $N_{\R}(n,m,\strategy) = N_{\hahn{G}}(n,m,\strategy)$ follows from Tarski's Principle. Finally,  Theorem~\ref{th:tropical_simplex}  yields the inequality $ N_{\trop}(n,m,\strategytrop) \leq  N_{\hahn{G}}(n,m,\strategy)$.

We now prove our claim.
Let $S = (s_{I,J})$ be a vector with entries in $\{-1, 0, +1\}$ indexed by all the pairs $(I, J) \subset [p] \times [q]$ such that $|I|=|J|\geq 1$. 
Given such a vector and a matrix $\M$ of size $p \times q$, we define the formula $F_S(\M)$ which is true if, and only if, the vector $S$ precisely provides the signs of all the minors of the matrix $\M$.
This formula is defined as follows:

\begin{multline*}
F_S(\M) := 
\bigg(\bigwedge_{
s_{I,J} = +1}
\det \M_{I,J} > 0 \bigg)
 \wedge\bigg(
\bigwedge_{
s_{I,J} = 0}
\det \M_{I,J} = 0 \bigg)\\
\wedge
\bigg(\bigwedge_{
s_{I,J} = -1}
\det \M_{I,J}  < 0 \bigg)\ ,
\end{multline*}
where $\M_{I,J}$ denotes the submatrix of $\M$ obtained with rows indexed by $I$ and columns indexed by $J$.
It is clearly a first order formula since any minor $\det \M_{I,J}$ is a polynomial in the entries of $\M$. 

In the sequel, we assume that $S = (s_{I,J})$ denotes the vector of the signs of all minors of the matrix 
$\Bigl(
\begin{smallmatrix}
\A & \b \\
\transpose{\cc} & 0 
\end{smallmatrix}
\Bigr)$,
in particular, $p=m+1$ and $q=n+1$.
The polyhedron $\puiseuxP(\A, \b)$ satisfies Assumption~\ref{ass:finiteness2} if, and only if, the following formula is satisfied:
\begin{multline*}
\mathsf{Bounded}(\A, \b) := \exists (\puiseuxl_j) > 0, (\puiseuxu_j) > 0, \, \forall \x = (\x_j), \, \A \x + \b \geq 0 \implies \\ \forall j \in [n], \, \puiseuxl_j \leq \x_j \leq \puiseuxu_j \ .
\end{multline*}
To determine whether the problem $\puiseuxLP(\A, \b, \cc)$ satisfies Assumption~\ref{ass:non_degeneracy2}, it suffices to examine the sign of the minors of $S$. More precisely, the assumption is fulfilled if, and only if, for all 
$(I, J) \subset [m+1] \times [n+1]$ such that $|I| = |J|$, either $s_{IJ} \neq 0$ or for all bijections $\sigma$ from $I$ to $J$, there exists $i \in I$ such that $s_{\{i\}\{\sigma(i)\}} = 0$. This can be written as follows:
\[
\mathsf{Non\_degenerate}_S := \bigwedge_{\displaystyle
\substack{
I, J \, \text{s.t.}\, s_{I,J} = 0\\
\exists \sigma \in \Sym(I,J), \ \forall i \in I, \ s_{\{i\}\{\sigma(i)\}} \neq 0}}
\text{false} \ .
\]
Indeed, the latter formula is true if, and only if, there is no minor falsifying the assumption.

Verifying that a set $I \subset [m]$ corresponds to a basis of the polyhedron $\puiseuxP(\A,\b)$ can be made by examining the signs of some specific minors. Indeed, $I \subset [m]$ is a basis if, and only if, the following two properties hold:
\begin{itemize}
\item $|I| = n$, and the submatrix $A_I$ 
is invertible, which can be expressed as $s_{I, [n]} \neq 0$.
\item the corresponding basic point $\x$ satisfies the constraints $\A \x + \b \geq 0$, $\x \geq 0$. Note that the entries of $\x$ are given by Cramer determinants. 
Then, given $k \in [m] \setminus I$,
\[
\A_k \x + \b_k = (-1)^n \det\begin{pmatrix}
\A_I & \b_I \\
\A_k & \b_k
\end{pmatrix} / \det \A_I  \ .
\]
It follows that the sign of 
$\A_k \x + \b_k$ can be expressed using $s_{I \cup \{k\}, [n+1]}$ and $s_{I, [n]}$. More precisely, if we denote $I = \{i_1, \dots, i_n\}$ with $i_1 < \dots < i_n$, it can be verified that 
$\A_k \x + \b_k \geq 0$ if, and only if, $(-1)^{l} s_{I, [n]} s_{I \cup \{k\}, [n+1]} \geq 0$ where $l$ is the unique integer such that $i_l < k < i_{l+1}$. Here, if $k < i_1$, we let $l = 0$; similarly, if $k > i_n$, we let $l = n$. 
 \end{itemize}

Given a sequence $I_1, \dots, I_k$ of bases, the pivoting rule switches to the basis $I_{k+1}$ if, and only if, $I_{k+1} \neq I_k$ and $I_{k+1} \supset I_k \setminus \{\strategy_k(I_1, \dots, I_k, S)\}$. Indeed, under the non-degeneracy assumption, we know that the set $K = I_k \setminus \{\strategy_k(I_1, \dots, I_k, S)\}$ corresponds to an edge of the polyhedron $\puiseuxP(\A, \b)$. Consequently, there are only two bases $I$ and $I'$ corresponding basic points incident to the edge.

Finally, to detect whether a basis $I$ corresponds to an optimal basic point, it suffices to check the signs of the reduced costs, which can be expressed again using minors. Thus, the optimality of $I$ can be verified by examining the entries of $S$.

Given $L \geq 1$ and a subset $I_1 \subset [m]$, the following formula $F_{L, I_1}(\A, \b, \cc)$ expresses the fact that if the problem $\puiseuxLP(\A, \b, \cc)$ satisfies the non-degeneracy conditions, then there exists a sequence of precisely $L$ bases starting from the initial basis $I_1$ such that the last one is optimal:
\begin{multline*}
F_{L, I_1}(\A, \b, \cc) := 
\mathsf{Bounded}(\A, \b) \implies \\
\bigvee_{S \in \{-1, 0, 1\}^{\binom{m+n+2}{n+1} -1}} \biggl(F_S
\Bigl(
\begin{smallmatrix}
\A & \b \\
\transpose{\cc} & 0
\end{smallmatrix}
\Bigr) 
\wedge \Bigl(\mathsf{Non\_degenerate}_S \implies \bigl(\bigvee_{(I_1, \dots, I_{L}) \, \text{s.t.} \, (*)} \text{true}\bigr)\Bigr)\biggr) \ . \footnotemark
\end{multline*} 
\footnotetext{Observe that $S$ is a vector of size $\sum_{k = 1}^{n+1} \binom{m+1}{k} \binom{n+1}{k}$, which is equal to $\binom{m+n+2}{n+1}$ by Vandermonde identity.}%
Here, $(*)$ stands for the fact that each $I_k$ is a basis, for all $k \leq L-1$, $I_{k+1}$ is the basis provided by the pivoting rule $\strategy_k(I_1, \dots, I_k, S)$, and $I_{L}$ is an optimal basis. These conditions are entirely expressed in terms of the $s_{I,J}$. Note that there may not be any such sequence of bases, in which case the disjunction $\vee_{(I_1, \dots, I_{L}) \, \text{s.t.} \, (*)} \text{true}$ is false, as expected.

The statement of our claim can now be written as following first-order sentence:
\[
\forall \A, \b, \cc, \, \bigwedge_{I_1 \subset [m]} \bigvee_{L \leq N} F_{L, I_1} (\A, \b, \cc) \ . 
\]
\strut\hfill
\end{proof}

In the sequel, we say that a function can be defined in the \emph{arithmetic model of computation with oracle} when it can be implemented using the arithmetic operations $+$, $-$, $\times$, $/$ over $\R$, and calls to the oracle $\Omega$.
\begin{theorem} \label{th:main}
Let $\strategy$ be a combinatorial pivoting strategy such that the following two conditions are satisfied:
\begin{itemize}
\item each function $\strategy_k$ can be defined in the arithmetic model of computation with oracle,
\item the number of arithmetic operations and calls to the oracle, and the space complexity of every $\strategy_k$ are polynomially bounded by $n$, $m$ and $k$.

\end{itemize}
Suppose that the classical simplex algorithm equipped with $\strategy$ is strongly polynomial on all non-degenerate linear programs over $\R$. Then all tropical linear programs over the max-plus semiring $\trop[\R]$ can be solved by a strongly polynomial algorithm.

\end{theorem}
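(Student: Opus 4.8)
The plan is to run the procedure of Theorem~\ref{thm:simplex_arbitrary} with the tropicalized rule $\strategytrop$ in place of an arbitrary tropical pivoting rule (legitimate since Theorem~\ref{th:tropical_simplex} holds for \emph{any} tropical pivoting rule), and then bound its cost. So, given $\tropLP(A,b,c)$ over $\trop[\R]$ with $A\in\strop^{m\times n}$, I would form the auxiliary programs $\phaseI$ and $\pertLP$ over $\germs=\trop[\R^{n+3}]$ and solve each of them with the tropical simplex method of Section~\ref{subsec:generic_instances}, using $\strategytrop$ as tropical pivoting strategy. The first thing to do is to fix the perturbation parameters $\Ipert{l}$, $\Ipert{d}$ and $\Ipert{l}_{n+1}$ to be rationals of bit-size polynomial in $m$, $n$ and the bit-size of the input, obeying the layering requirements of Sections~\ref{subsec:general_algorithm} and~\ref{subsec:phase_I}; this is routine since only finitely many ``layer'' coordinates must be placed in pairwise distinct, polynomially bounded positions. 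Both auxiliary programs have $O(n)$ variables and $O(m+n)$ constraints and, by Lemma~\ref{lemma:genericity_phase_I} and the construction of $\pertLP$, satisfy Assumptions~\ref{ass:finiteness} and~\ref{ass:non_degeneracy}; in particular their extended matrices are tropically generic, so that the tropical oracle $\oracletrop$ queried by $\strategytrop$ is well defined. Correctness of the output for the original program is then exactly the content of Theorem~\ref{thm:simplex_arbitrary}, and the remaining work is the complexity analysis.

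For the \emph{number of pivots}, the key input is Proposition~\ref{prop:iter_nb}. The group $\R^{n+3}$, with the lexicographic order and coordinatewise addition, is a divisible totally ordered group, so the proposition gives $N_{\trop[\R^{n+3}]}(n',m',\strategytrop)\le N_{\R}(n',m',\strategy)$ for the dimensions $(n',m')$ of $\phaseI$ and of $\pertLP$. The hypothesis that the classical simplex with $\strategy$ is strongly polynomial on non-degenerate linear programs over $\R$ bounds the number of its arithmetic operations, hence a fortiori $N_{\R}(n',m',\strategy)$, by a polynomial in $n'$ and $m'$, hence in $m$ and $n$. Thus each of the two tropical simplex runs terminates after a number of pivots polynomial in $m$ and $n$.

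For the \emph{cost of one pivot} and the \emph{size of the numbers}, I would combine Theorem~\ref{th:tropical_simplex} with the two hypotheses on $\strategy$. Pivoting and computing the reduced costs cost $O(n'(m'+n'))$ operations over $\germs=\trop[\R^{n+3}]$, each of which is a lexicographic comparison or a coordinatewise sum of vectors of $\R^{n+3}$, hence $O(n)$ operations over $\R$. Evaluating $\strategytrop_k$ uses, by hypothesis, a number of arithmetic operations over $\R$ and of oracle calls polynomial in $n$, $m$ and $k$, which is polynomial in $m$, $n$ because $k$ is at most the (polynomial) pivot count; and each call to $\oracletrop$ is the sign of one tropical permanent of a square submatrix of the extended matrix, of order $O(m+n)$, i.e.\ an optimal assignment problem solvable in $O((m+n)^3)$ operations over $\germs$. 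Concerning sizes, the data of $\phaseI$ and $\pertLP$ is polynomially bounded by the choice of parameters, and every quantity subsequently produced is a coordinate of a tropical basic point or an entry of a tropical reduced cost vector, which, being given by tropical Cramer determinants, is a difference of tropical minors of the extended matrix, hence of modulus at most $O(m+n)$ times the largest modulus in the data (compare the extreme-point bound $u=2n\max(\max_{ij}|A_{ij}|,\max_i|b_i|)$ of Section~\ref{subsec:general_algorithm}). With the linear working space of Theorem~\ref{th:tropical_simplex} and the polynomial space of $\strategytrop$, the whole computation therefore uses polynomial space. Multiplying the polynomial pivot count by the polynomial per-pivot cost and adding the negligible cost of the final projections $\first$ and $\second$ gives a strongly polynomial algorithm solving $\tropLP(A,b,c)$.

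I do not expect a genuine obstacle here: the conceptual heart -- that the classical pivot count dominates the tropical one -- is already delivered by Proposition~\ref{prop:iter_nb} through Theorem~\ref{th:tropical_simplex} and Tarski's principle. The point that needs care is the bookkeeping on \emph{sizes}: one must verify that the two successive perturbation layers of Sections~\ref{subsec:general_algorithm} and~\ref{subsec:phase_I} can be realized with polynomially bounded bit-size while still enforcing every genericity and layering condition (so that Theorem~\ref{th:tropical_simplex} and Proposition~\ref{prop:iter_nb} apply), and that no tropical operation performed during the two runs -- in particular none of the assignment problems solved to answer the oracle queries of $\strategytrop$ -- lets the magnitudes of the manipulated numbers grow beyond a polynomial. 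This is precisely where the a priori bounds on tropical minors and on the coordinates of tropical basic points enter.
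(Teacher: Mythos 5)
Your proposal is correct and follows essentially the same route as the paper's proof: construct $\phaseI$ and $\pertLP$ over $\germs=\trop[\R^{n+3}]$ with polynomially bounded perturbation parameters, apply $\strategytrop$, use Proposition~\ref{prop:iter_nb} to transfer the classical pivot bound to the tropical setting, and then add up per-pivot costs (reduced costs and pivoting from Theorem~\ref{th:tropical_simplex}, oracle answers via optimal assignment, and the hypothesized cost of $\strategy_k$). The only cosmetic difference is that the paper fixes the perturbation parameters $\Ipert{d}_i$, $\Ipert{l}_j$ explicitly as small integers in~\eqref{eq:extra_coeffs} (independent of the input bit-size) rather than abstractly as ``rationals of polynomial bit-size,'' and bounds the cost of an oracle call by $O(n^3)$ rather than your looser $O((m+n)^3)$ (square submatrices of the extended matrix have order at most $n+1$); neither affects the conclusion.
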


\begin{proof}
Let $m, n \geq 1$. First observe that $N_\R(n, m, \strategy)$ is polynomially bounded in $n$ and $m$. Indeed, the simplex algorithm equipped with $\strategy$ uses at least one arithmetic operation at each pivoting step. As a consequence, the number of arithmetic operations is greater that $N_\R(n, m, \strategy)$ in the worst case. 

Let $\tropLP(A, b, c)$ be a tropical linear program, with $A \in \trop[\R]^{m \times n}$, $b \in \trop[\R]^m$ and $c \in \trop[\R]^n$. We construct Problems $\pertLP$ and $\phaseI$ as in Section~\ref{sec:handl-degen-line}, by choosing the additional coefficients $\Ipert{d}$ and $\Ipert{l}$ as follows:
\begin{equation}
  \begin{aligned}
   & \Ipert{d}_i := (-i, 0_{\groupPrime})\  &  \text{for all} \ i \in [m] \\
   & \Ipert{l}_j := (-(j+m+1), 0_{\groupPrime}) &\text{for all}  \ j \in [n] \\
   & \Ipert{l}_{n+1} := (-(m+1), 0_{\groupPrime}) &
  \end{aligned}
\label{eq:extra_coeffs}
\end{equation}
We recall that $\groupPrime = \R^{n+1}$, and $\epsMat \in \groupPrime^{(m+n+2) \times (n+1)}$ is given by Lemma~\ref{lemma:perturbation}. In this way, all the assumptions on the $\Ipert{d}$ and $\Ipert{l}$ of Sections~\ref{subsec:general_algorithm} and~\ref{subsec:phase_I} are satisfied. Moreover, the tropical operations $\pert{v} \tplus \pert{w}$ and $\pert{v} \ttimes \pert{w}$ over the semiring $\trop[\R^2 \times \groupPrime]$ can be computed in a number of arithmetic operations (over $\R$) polynomially bounded by $n$, and their space complexity is bounded by a polynomial in the sizes of $\pert{v}$ and $\pert{w}$.

By Theorem~\ref{thm:simplex_arbitrary}, successively applying the tropical simplex algorithm equipped with $\strategytrop$ on Problems $\phaseI$ and $\pertLP$ solves $\tropLP(A, b, c)$. By Proposition~\ref{prop:iter_nb}, the number of iterations is bounded by $N_{\R}(n+1, m+n+3, \strategy)$ and $N_{\R}(n,m+n+1, \strategy)$, respectively. For the two algorithms, at the iteration $k$, the number of arithmetic operations needed to compute the tropical reduced costs and to pivot to the next basis (if needed) is polynomially bounded by $n$ and $m$ due to Theorem~\ref{th:tropical_simplex}. Moreover, since computing the sign of a tropical minor can be done in $O(n^3)$ arithmetic operations over $\trop[\R^2 \times \groupPrime]$, the number of arithmetic operations made by the function $\strategy_k$ is bounded by a polynomial in $n$, $m$ and $k$. Subsequently, the number of arithmetic operations performed by each algorithm is polynomially bounded by $n$ and $m$. 

It remains to show the space complexity is polynomially bounded by the size of the inputs $A$, $b$, and $c$. As discussed above, the space complexity of each elementary operation over $\trop[\R^2 \times \groupPrime]$ is polynomially bounded by the size of their inputs. Similarly, the space complexity of each call to the pivoting rule $\strategy_k$ is polynomially bounded by $n$, $m$, and $k$. As there is a polynomial number in $n$ and $m$ of such operations, we deduce that in total, the space complexity of the algorithms is polynomially bounded by $n$, $m$, and the size of the inputs $\pert{A}_{ij}$, $\pert{b}_i$, $\pert{c}_j$, $\pert{d}_i$, $\pert{u}$, and $\pert{l}_j$. Given the choice made in~\eqref{eq:extra_coeffs}, 
this can be bounded by a polynomial in $n$, $m$, and the size of the entries $A_{ij}$, $b_i$ and $c_j$. 

We conclude that $\tropLP(A, b, c)$ can be solved by strongly polynomial algorithm.
\end{proof}

The following result is an immediate corollary of Theorems~\ref{thm:MPG_to_tropLP} and~\ref{th:main}:
\begin{corollary}\label{cor:main}
Let $\strategy$ be a combinatorial pivoting strategy as in Theorem~\ref{th:main}. Suppose that the classical simplex algorithm equipped with $\strategy$ is strongly polynomial on all non-degenerate linear programs over $\R$. Then mean payoff games can be solved by a strongly polynomial algorithm.
\end{corollary}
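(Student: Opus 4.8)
The plan is to chain together the two ingredients that the corollary itself names: the reduction of mean payoff games to tropical linear feasibility provided by Theorem~\ref{thm:MPG_to_tropLP}, and the construction of a strongly polynomial tropical solver out of a strongly polynomial combinatorial simplex algorithm provided by Theorem~\ref{th:main}. In other words, I would argue: hypothesis on $\strategy$ $\Rightarrow$ (by Theorem~\ref{th:main}) strongly polynomial algorithm for arbitrary tropical linear programs over $\trop[\R]$ $\Rightarrow$ (by Theorem~\ref{thm:MPG_to_tropLP} and a size-preserving encoding) strongly polynomial algorithm for mean payoff games.

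Concretely, I would start from a mean payoff game given by payment matrices $A, B \in \trop[\R]^{m \times n}$ and an initial state $\bar{\jmath} \in [n]$. By Theorem~\ref{thm:MPG_to_tropLP}, $\bar{\jmath}$ is winning for Max if, and only if, the system of tropically linear inequalities $x_{\bar{\jmath}} \geq 0$ and $A \ttimes x \leq B \ttimes x$ admits a solution $x \in \trop[\R]^n$. As recalled at the beginning of Section~\ref{sec:handl-degen-line}, such a system can be rewritten in linear time into the normal form $\hat{A}^+ \ttimes x \tplus \hat{b}^+ \geq \hat{A}^- \ttimes x \tplus \hat{b}^-$ for some $\hat{A} \in \strop^{(m+1) \times n}$ and $\hat{b} \in \strop^{m+1}$ whose entries are, up to sign, among the entries of $A$, the entries of $B$, and $\zero_{\trop}$; in particular their encodings are no larger than those of the game. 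Choosing any objective vector $\hat{c} \in \trop^n$ (the choice is immaterial since only feasibility matters) yields a tropical linear program $\tropLP(\hat{A}, \hat{b}, \hat{c})$ over $\trop[\R]$, of size polynomial in the size of the game, whose feasibility is equivalent to $\bar{\jmath}$ being a winning state.

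Next I would invoke Theorem~\ref{th:main}: under the standing hypothesis on the combinatorial pivoting strategy $\strategy$, every tropical linear program over the max-plus semiring $\trop[\R]$ is solved by a strongly polynomial algorithm. By Theorem~\ref{thm:simplex_arbitrary}, that algorithm reports infeasibility whenever the instance is infeasible (this is what its Phase~I stage, built on $\phaseI$ and $\pertLP$, detects), so running it on $\tropLP(\hat{A}, \hat{b}, \hat{c})$ decides whether $\bar{\jmath}$ is winning for Max in a number of arithmetic operations polynomial in $m$ and $n$, with all intermediate data of polynomially bounded encoding size. Repeating this for each of the $n$ possible initial states — which merely multiplies the running time by $n$ — then determines the full set of winning initial states, still in strongly polynomial time; hence mean payoff games can be solved by a strongly polynomial algorithm.

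I do not expect a genuine obstacle, since the statement is essentially a composition of two results already established in the excerpt. The only points that need care, and they are routine bookkeeping, are (i) that the passage from the game to $\tropLP(\hat{A}, \hat{b}, \hat{c})$ does not inflate the numerical data — it does not, as the coefficients are, up to sign, entries of $A$, $B$, or $\zero_{\trop}$, while the (possibly degenerate) instance is turned into $\pertLP$ and $\phaseI$ by a perturbation scheme that is internal to Theorem~\ref{th:main} and uses perturbation coefficients of polynomially bounded size — and (ii) that \emph{strongly polynomial} is to be read as ``arithmetic-operation count bounded by a polynomial in $m$ and $n$ alone, together with polynomially bounded bit-sizes for all manipulated quantities'', which is exactly what Theorems~\ref{th:main} and~\ref{thm:simplex_arbitrary} deliver.
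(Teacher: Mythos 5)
Your proof is correct and follows exactly the route the paper intends: the paper merely states that the corollary is immediate from Theorems~\ref{thm:MPG_to_tropLP} and~\ref{th:main}, and your argument spells out that composition with the right bookkeeping (normal-form rewriting, dummy objective, iteration over initial states). Nothing to add.
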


\section{Concluding remarks}

It is natural to ask whether Theorem~\ref{th:main} can be extended to more general classes of pivoting strategies. This
leads to the important question of characterizing the classical (deterministic or randomized) pivoting strategies which
admit tropical analogues that can be implemented efficiently. Its study requires a number of very different technical
tools, and so, we leave this for a follow up work.

Also, our perturbation scheme (Section~\ref{sec:handl-degen-line}) might have applications in tropical geometry. In
particular, it is worthwhile to compare this perturbation to the concept of stable intersection.

\appendix


\begin{thebibliography}{10}

\bibitem{AGG}
M.~Akian, S.~Gaubert, and A.~Guterman.
\newblock Tropical polyhedra are equivalent to mean payoff games.
\newblock {\em International Journal of Algebra and Computation}, 22(1):125001,
  2012.

\bibitem{tropical+simplex}
X.~Allamigeon, P.~Benchimol, S.~Gaubert, and M.~Joswig.
\newblock Tropicalizing the simplex algorithm.
\newblock Submitted. E-print \arxiv{1308.0454}, 2013.

\bibitem{AllamigeonGaubertKatzJCTA2011}
X.~Allamigeon, S.~Gaubert, and R.D. Katz.
\newblock The number of extreme points of tropical polyhedra.
\newblock {\em Journal of Combinatorial Theory, Series A}, 118(1):162 -- 189,
  2011.

\bibitem{AllamigeonGaubertKatzLAA2011}
X.~Allamigeon, S.~Gaubert, and R.D. Katz.
\newblock Tropical polar cones, hypergraph transversals, and mean payoff games.
\newblock {\em Linear Algebra and its Applications}, 435(7):1549--1574, 2011.

\bibitem{BriecHorvath04}
W.~Briec and C.~Horvath.
\newblock $\mathbb{B}$-convexity.
\newblock {\em Optimization}, 53:103--127, 2004.

\bibitem{But:10}
P.~Butkovi{\v{c}}.
\newblock {\em Max-linear systems: theory and algorithms}.
\newblock Springer, 2010.

\bibitem{BA-08}
P.~{Butkovi\v{c}} and A.~Aminu.
\newblock Introduction to max-linear programming.
\newblock {\em IMA Journal of Management Mathematics}, 20(3):233--249, 2008.

\bibitem{DevelinSantosSturmfels05}
M.~Develin, F.~Santos, and B.~Sturmfels.
\newblock On the rank of a tropical matrix.
\newblock In {\em Combinatorial and computational geometry}, volume~52 of {\em
  Math. Sci. Res. Inst. Publ.}, pages 213--242. Cambridge Univ. Press,
  Cambridge, 2005.

\bibitem{develin2004}
M.~Develin and B.~Sturmfels.
\newblock Tropical convexity.
\newblock {\em Doc. Math.}, 9:1--27 (electronic), 2004.
\newblock correction: ibid., pp.\ 205--206.

\bibitem{DevelinYu07}
M.~Develin and J.~Yu.
\newblock Tropical polytopes and cellular resolutions.
\newblock {\em Experiment. Math.}, 16(3):277--291, 2007.

\bibitem{Edelsbrunner87}
H.~Edelsbrunner.
\newblock {\em Algorithms in combinatorial geometry}, volume~10 of {\em EATCS
  Monographs on Theoretical Computer Science}.
\newblock Springer-Verlag, Berlin, 1987.

\bibitem{EmirisCannySeidel97}
I.Z. Emiris, J.F. Canny, and R.~Seidel.
\newblock Efficient perturbations for handling geometric degeneracies.
\newblock {\em Algorithmica}, 19(1-2):219--242, 1997.

\bibitem{FilarAltmanAvrachenkov2002}
J.A. Filar, E.~Altman, and K.E. Avrachenkov.
\newblock An asymptotic simplex method for singularly perturbed linear
  programs.
\newblock {\em Oper. Res. Lett.}, 30(5):295--307, 2002.

\bibitem{GK06}
S.~Gaubert and R.D. Katz.
\newblock The {M}inkowski theorem for max-plus convex sets.
\newblock {\em Linear Algebra and Appl.}, 421:356--369, 2007.

\bibitem{GaubertKatz2011minimal}
S.~Gaubert and R.D. Katz.
\newblock Minimal half-spaces and external representation of tropical
  polyhedra.
\newblock {\em Journal of Algebraic Combinatorics}, 33(3):325--348, 2011.

\bibitem{GKS11}
S.~Gaubert, R.D. Katz, and S.~Sergeev.
\newblock Tropical linear-fractional programming and parametric mean payoff
  games.
\newblock {\em Journal of symbolic computation}, 47(12):1447--1478, December
  2012.

\bibitem{GM08}
St{\'e}phane Gaubert and Fr\'ed\'eric Meunier.
\newblock Carath{\'e}odory, {H}elly and the others in the max-plus world.
\newblock {\em Discrete and Computational Geometry}, 43(3):648--662, 2010.

\bibitem{GroetschelLovaszSchrijver1993}
M.~Gr{\"o}tschel, L.~Lov{\'a}sz, and A.~Schrijver.
\newblock {\em Geometric Algorithms and Combinatorial Optimization}, volume~2
  of {\em Algorithms and Combinatorics}.
\newblock Springer, second corrected edition edition, 1993.

\bibitem{GKK-88}
V.A. Gurvich, A.V. Karzanov, and L.G. Khachiyan.
\newblock Cyclic games and an algorithm to find minimax cycle means in directed
  graphs.
\newblock {\em USSR Computational Mathematics and Mathematical Physics},
  28(5):85--91, 1988.

\bibitem{ItenbergMikhalkinShustin07}
Ilia Itenberg, Grigory Mikhalkin, and Eugenii Shustin.
\newblock {\em Tropical algebraic geometry}, volume~35 of {\em Oberwolfach
  Seminars}.
\newblock Birkh\"auser Verlag, Basel, 2007.

\bibitem{Jeroslow1973}
R.G. Jeroslow.
\newblock Asymptotic linear programming.
\newblock {\em Operations Research}, 21(5):1128--1141, 1973.

\bibitem{litvinov00}
G.L. Litvinov, V.P. Maslov, and G.B. Shpiz.
\newblock Idempotent functional analysis: an algebraic approach.
\newblock {\em Math. Notes}, 69(5):696--729, 2001.

\bibitem{MaclaganSturmfels}
Diane Maclagan and Bernd Sturmfels.
\newblock Introduction to tropical geometry.
\newblock Draft of a book,
  \url{http://homepages.warwick.ac.uk/staff/D.Maclagan/papers/TropicalBook.html}.

\bibitem{ribenboim}
P.~Ribenboim.
\newblock Fields: algebraically closed and others.
\newblock {\em Manuscripta Math.}, 75:115--150, 1992.

\bibitem{schewe2009}
S.~Schewe.
\newblock From parity and payoff games to linear programming.
\newblock In {\em Mathematical foundations of computer science 2009}, volume
  5734 of {\em Lecture Notes in Comput. Sci.}, pages 675--686. Springer,
  Berlin, 2009.

\bibitem{seidenberg1954new}
A.~Seidenberg.
\newblock A new decision method for elementary algebra.
\newblock {\em Annals of Mathematics}, pages 365--374, 1954.

\bibitem{tarski1951decision}
A.~Tarski.
\newblock A decision method for elementary algebra and geometry.
\newblock Research report R-109, Rand Corporation, 1951.

\bibitem{viro2000}
O.~Viro.
\newblock Dequantization of real algebraic geometry on logarithmic paper.
\newblock In {\em European {C}ongress of {M}athematics, {V}ol. {I}
  ({B}arcelona, 2000)}, volume 201 of {\em Progr. Math.}, pages 135--146.
  Birkh\"auser, Basel, 2001.

\bibitem{zwick}
U.~Zwick and M.~Paterson.
\newblock The complexity of mean payoff games on graphs.
\newblock {\em Theoret. Comput. Sci.}, 158(1-2):343--359, 1996.

\end{thebibliography}
\end{document}